\newtheorem{lemma}{Lemma}[section]
\newtheorem{corollary}{Corollary}[section]
\newtheorem{theorem}{Theorem}
\newtheorem{conjecture}{Conjecture}
\newtheorem{proposition}{Proposition}[section]
\newtheorem{definition}{Definition}[section]
\def\<{\langle}
\def\>{\rangle}
\begin{document}

\title{On the logarithimic calculus and Sidorenko's conjecture}
\author{{\sc J.L. Xiang Li~~~~~ Bal\'azs Szegedy}}

\maketitle

\abstract{We study a type of calculus for proving inequalities between subgraph densities which is based on Jensen's inequality for the logarithmic function. As a demonstration of the method we verify the conjecture of Erd\"os-Simonovits and Sidorenko for new families of graphs. In particular we give a short analytic proof for a result by Conlon, Fox and Sudakov. Using this, we prove the forcing conjecture for bipartite graphs in which one vertex is complete to the other side.}

\bigskip
\section{Introduction}
Inequalities between subgraph densities is subject of extensive study. Many problems in extremal graph theory can be formulated in this language.  Subgraph densities can be conveniently defined through graph homomorphisms.  A graph homomorphism between finite graphs $H=(V(H),E(H))$ and $G=(V(G),E(G))$ is a map $\phi:V(H)\rightarrow V(G)$ such that the image of every edge in $H$ is an edge in $G$.  The subgraph density $t(H,G)$ is the probability that a random map $\phi:V(H)\rightarrow V(G)$ is a graph homomorphism. 

In the frame of the graph limit theory \cite{LSz} $G$ can be replaced by an analytic object which is a two variable symmetric measurable function.
Many statements in graph theory can be equivalently stated in this analytic language as follows. 
Let $(\Omega,\mu)$ be a probability space and $W:\Omega^2\rightarrow\mathbb{R}$ be a bounded measurable function such that $W(x,y)=W(y,x)$ for every pair $x,y\in\Omega$. If the range of $W$ is in the interval $[0,1]$ then it is called a {\it graphon}. Let 
\begin{equation}\label{dens}
t(H,W)=\mathbb{E}\bigl(\prod_{(x_i,x_j)\in E(H)}W(x_i,x_j)\bigr)
\end{equation}
where $V(H)=\{x_i\}_{i=1}^n$ are independently chosen from $\Omega$.  
It is easy to see that if $\Omega=V(G)$ with the uniform distribution and $W_G:V(G)\times V(G)\rightarrow\{0,1\}$ is the adjacency matrix of $G$ then $t(H,G)=t(H,W_G)$. 

The Cauchy-Schwartz inequality is a fundamental tool in establishing inequalities between subgraph densities.  It was shown in \cite{LSz} that every true inequality between subgraph densities is a consequence of a possibly infinite number of Cauchy-Schwartz inequalities. As a negative result Hatami and Norin \cite{HN} proved the undecidability of the general problem. 

Motivated by earlier work on entropy calculations \cite{KR} in this area, we develop a method using Jensen's inequality for logarithmic functions. In particular we use the concavity of $z\mapsto\ln z$ and the convexity of $z\mapsto z\ln z$.
Similarly to the Cauchy-Schwartz calculus, the logarithmic calculus is capable of proving inequalities through a line of symbolic calculations without verbal arguments. This can make it suitable for computer based algorithms designed to find new inequalities.

We demonstrate the power of the logarithmic calculus approach on a fundamental problem known as Sidorenko's conjecture (also conjectured by Erd\"os and Simonovits \cite{Sim}). In the combinatorial language the conjecture says that following. 

\begin{conjecture} Let $H$ be a bipartite graph and $G$ be an arbitrary graph. Then $t(H,G)\geq t(P_2,G)^e$ where $P_2$ is the single edge and $e$ is the number of edges in $H$. 
\end{conjecture}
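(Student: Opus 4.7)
\smallskip\noindent
My plan is to attack the full conjecture by induction on $|V(H)|$, with the logarithmic calculus providing the inductive step. In graphon language, writing $e = |E(H)|$, we want
\[
\log t(H,W) \ge e\,\log \mathbb{E}[W],
\]
which suggests combining the two Jensen inequalities the introduction highlights: concavity of $z \mapsto \log z$ (to move $\log$ inside an expectation) and convexity of $z \mapsto z\log z$ (to move it back outside with a reweighting). The naive one-shot bound $\log t(H,W) \ge e\,\mathbb{E}[\log W]$ fails, since by concavity of $\log$ it is dual to what we need, so Jensen must instead be applied \emph{locally}, vertex by vertex, so that the defect introduced at one step is compensated at a later one.

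The inductive step I would attempt proceeds as follows. Bipartition $V(H) = A \cup B$ and pick a vertex $v \in A$ with neighborhood $N(v) \subseteq B$. Writing $x$ for the variable attached to $v$ and $y = (y_b)_{b\in B}$ for the $B$-variables, factor
\[
t(H,W) = \int_{\Omega^{|B|}} \Bigl(\int_\Omega \prod_{b \in N(v)} W(x,y_b)\,d\mu(x)\Bigr)\,g(y)\,d\mu^{|B|}(y),
\]
where $g(y)$ collects the remaining edge weights over $E(H \setminus v)$ after the $A\setminus\{v\}$-variables have been integrated out. Denoting the inner integral by $f(y)$, I would reweight the outer measure by $g/\int g$ and apply $z\log z$-convexity to $f$ against this reweighted measure. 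Done carefully, one such step should lower-bound $\log t(H,W)$ by $\log t(H',W)$ for a strictly smaller graph $H'$ plus a correction that is a nonnegative combination of $\log \mathbb{E}[W]$ and entropy-like terms of auxiliary densities built from $W$. Iterating this reduction until $H$ is exhausted should, in favorable cases, assemble into the desired inequality.

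The hardest part --- and the reason Sidorenko's conjecture has resisted proof since 1980 --- lies precisely in controlling the sign and magnitude of the correction terms across the induction. When $v$ has enough symmetry (a leaf, or a vertex complete to one side in the style of Conlon--Fox--Sudakov) the correction collapses to a clean multiple of $\log \mathbb{E}[W]$ and the induction closes. For general bipartite $H$, however --- in particular for high-girth graphs with no convenient structural handle --- the Jensen slack accumulated at successive vertices need not assemble into anything of the correct sign, and no purely symbolic reshuffling of $\log$ and $z\log z$ steps is currently known to cancel it uniformly. My honest expectation is therefore that this plan, executed with some care in the choice of $v$ and the auxiliary reweighting at each step, proves the conjecture for a substantially larger family than was previously known, but that the full statement requires an additional ingredient beyond the logarithmic calculus per se.
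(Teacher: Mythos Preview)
\smallskip\noindent
The statement you are attempting is Sidorenko's conjecture itself, which is stated in the paper as an open conjecture and is \emph{not} proved there. There is no ``paper's own proof'' to compare against: the paper establishes only special cases (Theorem~\ref{CFST} for graphs with a dominating vertex on one side, Theorem~\ref{rts} for reflection trees, Theorem~\ref{edglue} for edge-gluing of Sidorenko graphs), and explicitly leaves the general bipartite case open.

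Your inductive scheme is, in spirit, exactly the paper's logarithmic calculus: peel off a vertex, reweight the remaining integral by a normalized density, apply Jensen for $\ln z$ and $z\ln z$, and hope the correction terms assemble with the right sign. This is precisely what the proofs of the Blakley--Roy inequality and Theorem~\ref{CFST} do, and the paper's ``smoothness'' machinery in Section~\ref{smr} is a systematic way of tracking when the correction terms cooperate across a gluing. Your own final paragraph is accurate: for a leaf, or for the Conlon--Fox--Sudakov dominating vertex, the correction collapses to a clean multiple of $\ln d$; for general $H$ no one knows how to control the accumulated Jensen slack, and this is exactly why the conjecture remains open. So your proposal is not a proof of the conjecture --- nor does the paper contain one --- but rather a correct informal description of the method the paper uses to obtain its partial results, together with an honest identification of where it breaks down.
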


Phrased originally by Sidorenko in the analytic language \cite{Sid}, the conjecture says that $t(H,W)\geq t(P_2,W)^e$ holds for every bounded non-negative measurable function $W$. Expressions of the form $t(H,W)$ appear as Mayer integrals in statistical mechanics, Feynman integrals in quantum field theory and multicenter integrals in quantum chemistry. The conjecture is also related to many topics such as Markov chains \cite{BP}, matrix theory and quasi-randomness. 
Note that in the analytic form if $W$ is constant then $t(H,W)=t(P_2,W)^e$. The forcing conjecture rephrased in this analytic language says that if $H$ is not a tree then this is the only case when equality holds. Note that this is a refinement of Sidorenko's conjecture.

Sidorenko's conjecture has been verified for various families of graphs, including even cycles, trees \cite{Sid}, and hypercubes \cite{Hat}.  Most recently the conjecture has been proven by Sudakov, Conlon and Fox \cite{CFS} for bipartite graphs with one vertex complete to the other side.  The proof uses the tensor power trick and a certain probabilistic method called dependent random choice.  As a quick demonstration of the logorithmic calculus we give an analytic proof of this result. 
Essentially the same proof also implies the forcing conjecture for the same graphs. 
Note that the forcing conjecture for bipartite graphs in which two vertices are complete to the other side was proved by Conlon, Sudakov and Fox \cite{CFS}.

The logarithmic calculus also yields stronger intermediate inequalities that are preserved under `gluing operations' and thus provide an iterative method for proving Sidorenko's conjecture for many new graphs. A theorem of this type is the following. We call a graph a reflection tree (for precise definition see chapter \ref{smr}) if it can be obtained from a tree by gluing reflected versions of subtrees on it.
They include the bipartite graphs considered by Conlon, Fox and Sudakov. 
Let us call a bipartite graph $H$ a {\it Sidorenko graph} if $t(H,W)\geq t(P_2,W)^e$ for every graphon $W$. 

\begin{theorem}\label{rts} Reflection trees are Sidorenko.
\end{theorem}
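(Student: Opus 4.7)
The strategy is to prove a rooted strengthening of Sidorenko's inequality that survives the reflection-gluing operation. For a bipartite graph $H$ and a distinguished set $R\subseteq V(H)$ of \emph{roots}, define the partial density $t(H,W\mid x_R)$ by fixing the images of $R$ to a tuple $x_R\in\Omega^R$ and averaging $\prod_{(i,j)\in E(H)}W(x_i,x_j)$ over the remaining vertices. I would try to establish, by induction on the construction of the reflection tree $H$ (with $R$ chosen as the boundary at which the next reflection will be attached), the rooted inequality
\begin{equation*}
\int_{\Omega^R}\ln t(H,W\mid x_R)\,d\mu^{R}(x_R)\;\geq\;e(H)\,\ln t(P_2,W).
\end{equation*}
The unrooted Sidorenko bound $t(H,W)\geq t(P_2,W)^{e(H)}$ then follows at once by applying the concavity of $\ln$ to the outer integral.

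The base case is when $H$ is a tree. Picking any leaf $v\notin R$ with unique neighbour $u$, integration over $v$ replaces the factor $W(x_u,x_v)$ by the degree function $d(x_u)=\int W(x_u,y)\,d\mu(y)$; iterating this peeling from the outside in turns $t(T,W\mid x_R)$ into a product of degree-type functions. Taking logarithms and bundling the contributions, one then closes the bound by Jensen's inequality for $\ln$ (concave) and $z\ln z$ (convex), precisely the two workhorses of the logarithmic calculus introduced earlier in the paper.

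For the inductive step, suppose $H_0$ satisfies the rooted inequality at $R$, and let $H=H_0\cup_{R}H_0^{*}$ denote the graph obtained by gluing a mirror copy of $H_0$ to itself along $R$. A direct calculation gives
\begin{equation*}
t(H,W)\;=\;\int_{\Omega^R} t(H_0,W\mid x_R)^{2}\,d\mu^{R}(x_R),\qquad e(H)=2\,e(H_0).
\end{equation*}
Applying the concavity of $\ln$ to the outer average and then the inductive hypothesis yields
\begin{equation*}
\ln t(H,W)\;\geq\;2\int_{\Omega^R}\ln t(H_0,W\mid x_R)\,d\mu^{R}(x_R)\;\geq\;2\,e(H_0)\,\ln t(P_2,W)\;=\;e(H)\,\ln t(P_2,W),
\end{equation*}
which is Sidorenko for $H$. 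To continue the induction beyond this one reflection, however, one must also propagate the \emph{rooted} version of the inequality to $H$ relative to whatever new root set $R'$ will serve as the boundary of the subsequent reflection.

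The main obstacle I anticipate is precisely that propagation step: the rooted inequality must be formulated so that it stays in force under arbitrary sequences of nested reflections. Compatibility with the bipartite structure (roots must be confined to one colour class) and with nested subtree reflections has to be tracked carefully, and this is where the formal framework promised in the reflection-tree chapter should pay off. The logarithmic calculus is essential here because the additivity of $\ln$ across the product densities produced by each reflection keeps the bookkeeping linear in the edge count, whereas working directly with $t(H,W)$ would require repeated Cauchy--Schwartz applications that obscure how the exponent $e(H)$ accumulates.
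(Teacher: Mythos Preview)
Your rooted inequality with the \emph{uniform} product measure $\mu^{R}$ is false already in the simplest cases. Take $H=P_2$ with $R$ a single endpoint; then $t(H,W\mid x_1)=d(x_1):=\int W(x_1,y)\,d\mu(y)$ and your inequality reads $\int \ln d(x_1)\,d\mu(x_1)\geq \ln d$, which is the wrong direction of Jensen (concavity of $\ln$ gives $\leq$, with equality only when $d(\cdot)$ is constant). Restricting $R$ to an independent set does not help, since a single vertex is independent. The paper's central idea is precisely the fix for this: replace $\mu^{R}$ by the tree-weighted measure with density $f_T=d^{-1}\prod_i d(x_i)^{1-r_i}\prod_{(i,j)\in E(T)}W(x_i,x_j)$ from Lemma~\ref{treeweight}. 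With this weight the marginal at each vertex is $d(x_i)/d$, so $\mathbb{E}(f_T\ln d(x_i))=d^{-1}\mathbb{E}(d(x_i)\ln d(x_i))\geq \ln d$ by convexity of $z\ln z$, and the sign comes out right. This weighted version of your rooted inequality is exactly the notion of \emph{smoothness} defined in Section~\ref{smr}.

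Your picture of the construction is also slightly off. A reflection tree is not built by iterated full reflections $H_0\mapsto H_0\cup_R H_0^{*}$ with a moving boundary $R'$; rather one fixes a base tree $T$ once and for all, reflects several \emph{subtrees} $T_i\subseteq T$ along independent sets, and glues all the resulting pieces back onto $T$. The paper's argument keeps $T$ as the anchor throughout: Lemma~\ref{refl} shows that reflecting a single subtree $T_i$ yields a graph in which $T_i$ is smooth; Lemma~\ref{ext} extends smoothness from $T_i$ up to all of $T$; and Lemma~\ref{gluing} then glues all the reflected pieces along the common tree $T$ in one shot. The propagation problem you flag as the main obstacle disappears, because the rooted object never changes---it is always $T$ with the fixed weight $f_T$.
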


Another theorem that we prove is the following.

\begin{theorem}[Edge gluing]\label{edglue} Let $H_1, H_2$ be Sidorenko graphs and $e_1\in E(H_1),e_2\in E(H_2)$ be arbitrary edges. Then the graph $H$ obtained from $H_1$ and $H_2$ by identifying $e_1$ and $e_2$ is also a Sidorenko graph.  
\end{theorem}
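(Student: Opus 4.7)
Let $\alpha := t(P_2, W)$ and let $\pi$ denote the probability measure on $\Omega^2$ with density $W/\alpha$ with respect to $\mu\otimes\mu$. Write $u,v$ for the endpoints of the identified edge $e$ of $H$; for $i=1,2$ define the rooted density $F_i : \Omega^2 \to [0,\infty)$ to be the expected product of $W$-values over $E(H_i)\setminus\{e_i\}$ when the endpoints of $e_i$ are pinned to $(x,y)$ and the remaining vertices of $H_i$ are averaged out. Because the residual vertex sets of $H_1$ and $H_2$ are integrated conditionally independently given $(x,y)$, one obtains the factorization
\[
t(H,W)\;=\;\int W(x,y)\,F_1(x,y)\,F_2(x,y)\,d\mu(x)\,d\mu(y)\;=\;\alpha\cdot\mathbb{E}_\pi[F_1F_2].
\]

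My first move would be Jensen's inequality for the concave function $\ln$ under $\pi$, one of the two pillars of the paper's logarithmic calculus:
\[
\ln t(H,W)\;\geq\;\ln\alpha+\mathbb{E}_\pi[\ln F_1]+\mathbb{E}_\pi[\ln F_2].
\]
Writing $m_i := |E(H_i)|$, the Sidorenko conclusion $t(H,W)\geq\alpha^{m_1+m_2-1}$ would follow at once from the two instances of the \emph{logarithmic Sidorenko} (LS) bound $\mathbb{E}_\pi[\ln F_i]\geq (m_i-1)\ln\alpha$.

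The main obstacle is that LS is a priori strictly stronger than the Sidorenko hypothesis on $H_i$: Jensen in the other direction only yields $\mathbb{E}_\pi[\ln F_i]\leq\ln\mathbb{E}_\pi[F_i]$, the wrong inequality. Iterating the Cauchy--Schwarz doubling $\mathbb{E}[WF^2]\geq\mathbb{E}[WF]^2/\mathbb{E}[W]$ along $e_i$ does promote Sidorenko for $H_i$ to Sidorenko for the $n$-fold glued graphs $H_i^{(n)}$, giving the whole family of moment bounds $\mathbb{E}_\pi[F_i^n]\geq\alpha^{n(m_i-1)}$ for every $n\in\mathbb{N}$; however, log-convexity of $n\mapsto\ln\mathbb{E}_\pi[F_i^n]$ with $L(0)=0$ converts such secant-slope bounds only into an \emph{upper} bound on $L'(0)=\mathbb{E}_\pi[\ln F_i]$, yielding nothing in the required direction.

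To close the gap I would invoke the second pillar of the logarithmic calculus, Jensen for the convex function $z\mapsto z\ln z$, applied to each rooted sub-density in a layered decomposition of $H_i$ (for instance the tree or reflection-tree decomposition established earlier in the paper for the families under consideration). Carried out at every stage of the argument that certifies $H_i$ as Sidorenko, this upgrades the Sidorenko inequality to its logarithmic strengthening, so that within the framework of the paper the property LS travels together with Sidorenko. Once LS is in hand for both $H_1$ and $H_2$ along their marked edges, the two earlier displays combine to give $t(H,W)\geq\alpha^{m_1+m_2-1}$, proving that $H$ is a Sidorenko graph.
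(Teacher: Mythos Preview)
Your reduction is exactly the one the paper uses: the factorization $t(H,W)=\alpha\,\mathbb{E}_\pi[F_1F_2]$ together with Jensen for $\ln$ reduces the edge-gluing theorem to the two ``logarithmic Sidorenko'' bounds $\mathbb{E}_\pi[\ln F_i]\geq (m_i-1)\ln\alpha$. In the paper's language this is precisely the statement that the edge $e_i$ is \emph{smooth} in $H_i$, and once you have it, Lemma~\ref{gluing} (gluing on smooth trees) finishes the proof just as you outline.

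The genuine gap is in how you propose to obtain LS from the Sidorenko hypothesis. Your suggestion is to revisit ``the argument that certifies $H_i$ as Sidorenko'' and upgrade it step by step using $z\ln z$. But the theorem is stated for \emph{arbitrary} Sidorenko graphs $H_1,H_2$: you are given only the inequality $t(H_i,W)\geq\alpha^{m_i}$ for all $W$, not a particular logarithmic-calculus proof of it. There need not be any tree or reflection-tree decomposition of $H_i$ to exploit; $H_i$ could be a hypercube, or any graph whose Sidorenko property was established by entirely different means. So ``re-running the proof with a logarithmic upgrade'' is not available as a general step, and your argument as written only covers graphs in the families treated earlier in the paper.

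What the paper actually does to bridge this gap is Theorem~\ref{edgesmooth}: every edge in a Sidorenko graph is smooth. The proof is a tensor-power argument combined with the law of large numbers. One passes to $G_k=G^{\otimes k}$, so that the rooted density $X_k=t_f(H^*,G_k)$ along a random edge $f$ is a product of $k$ i.i.d.\ copies of $X_1$; hence $k^{-1}\ln X_k$ concentrates at $\mathbb{E}[\ln X_1]$. On the other hand, applying the Sidorenko property of $H$ not to $G_k$ but to the subgraph $G_k'$ obtained by deleting the ``bad'' edges (those with $t_f(H^*,G_k)<d^{ka}/2$) forces a uniformly positive fraction of edges to be ``good''. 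Intersecting these two events and letting $k\to\infty$ yields $\mathbb{E}[\ln X_1]\geq a\ln d$, which is exactly LS. This is the missing idea: you need a device that converts the \emph{black-box} Sidorenko inequality into the pointwise-in-expectation logarithmic bound, and the tensor-power/LLN trick is what accomplishes it.
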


Note that in the above theorem the edges $e_1$ and $e_2$ can be identified in two different ways.

\section{Basics of Logarithmic Calculus}

\begin{lemma}[Jensen's inequality] Let $(\Omega,\mu)$ be a probability space, let $c$ be a convex (resp. concave) function on an interval $D\subset\mathbb{R}$ and $g:\Omega\rightarrow D$ be a measurable function. Then 
\begin{equation}\label{jen1}
\mathbb{E}(c(g))\geq c(\mathbb{E}(g))~~{\rm{\it (convex)}}~~~,~~~\mathbb{E}(c(g))\leq c(\mathbb{E}(g))~~{\rm{\it (concave)}}
\end{equation}
 Moreover if $\mathbb{E}(f)=1$ for some non-negative function $f$ on $\Omega$ then also 
 \begin{equation}\label{jen2}
\mathbb{E}(fc(g))\geq c(\mathbb{E}(fg))~~{\rm{\it (convex)}}~~~,~~~\mathbb{E}(fc(g))\leq c(\mathbb{E}(fg))~~{\rm{\it (concave)}}
\end{equation}
If $c$ is a strictly convex (concave) function then equality in (\ref{jen2}) is only possible if $g$ is constant on the support of $f$.
\end{lemma}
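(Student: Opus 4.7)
The plan is to deduce the weighted form (\ref{jen2}) from the unweighted form (\ref{jen1}) by a change-of-measure trick, and to prove (\ref{jen1}) itself from the supporting-line characterization of convex functions.

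First I would establish (\ref{jen1}) by the standard supporting-line argument: setting $x_0=\mathbb{E}(g)$, which lies in $D$ since $D$ is an interval and $g$ takes values there (the endpoint case is degenerate because then $g=x_0$ almost everywhere), a convex $c$ admits an affine minorant $c(x)\geq c(x_0)+m(x-x_0)$ tangent at $x_0$. Substituting $g$ for $x$ and integrating against $\mu$ gives $\mathbb{E}(c(g))\geq c(x_0)+m(\mathbb{E}(g)-x_0)=c(\mathbb{E}(g))$. The concave case follows by applying the convex case to $-c$.

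For (\ref{jen2}), I would introduce the probability measure $\wmu$ on $\Omega$ defined by $d\wmu=f\,d\mu$; this is indeed a probability measure since $f\geq 0$ and $\mathbb{E}(f)=1$. Applying (\ref{jen1}) to $g$ with respect to $\wmu$ gives $\mathbb{E}_{\wmu}(c(g))\geq c(\mathbb{E}_{\wmu}(g))$, which unpacks to $\mathbb{E}(fc(g))\geq c(\mathbb{E}(fg))$, as required. The concave version is again obtained by negation.

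For the equality clause, strict convexity says the supporting line lies strictly below $c$ except at $x_0$. Equality in the displayed chain therefore forces $f\cdot\bigl(c(g)-c(x_0)-m(g-x_0)\bigr)=0$ almost everywhere, i.e.\ $g=x_0$ wherever $f>0$, which is the stated conclusion. There is no real obstacle here — this is textbook material; the only minor point to keep an eye on is verifying that $\mathbb{E}(g)$ or $\mathbb{E}_{\wmu}(g)$ lies in $D$ so that the supporting line exists, which boils down to the observation that integrals of $D$-valued functions against probability measures stay in $D$ (and the boundary case is trivial).
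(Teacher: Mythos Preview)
Your proposal is correct and follows essentially the same approach as the paper: the paper sketches (\ref{jen1}) via the center-of-mass intuition (of which your supporting-line argument is the standard rigorous formulation) and derives (\ref{jen2}) exactly as you do, by treating $f$ as the density of a new probability measure. Your write-up is in fact more detailed than the paper's own proof, which omits the equality clause entirely.
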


\begin{proof} Jensen's inequality is a classical result whose proof is based on the intuitively clear fact that if a probability measure is concentrated on a convex (resp. concave) curve then the center of mass is above (resp. below) the curve.
The inequality (\ref{jen2}) is a direct consequence of (\ref{jen1}) if we consider $f$ as the density function of a new measure $\mu^*$ on $\Omega$. 
\end{proof}

\medskip

\medskip

We introduce some notation. Let $g(x_1,x_2,\dots,x_n)$ be a function on $[0,1]^n$. For a subset $S\subseteq\{x_1,x_2,\dots,x_n\}$ of the variables the we introduce the $|S|$-variable function
$$\mathbb{E}_S(g)=\int g~\prod_{x_i\in \overline{S}}dx_i.$$
Notice that $\mathbb{E}(g)=\mathbb{E}(\mathbb{E}_S(g))$. To simplify notation we will identify vertices of graphs with variables representing them in the formula (\ref{dens}). As the first illustration of the logarithmic calculus we start by the Blakley-Roy inequality. 

\begin{proposition}[Blakley-Roy] Let $W:[0,1]^2\rightarrow\mathbb{R}^+$ be a bounded symmetric measurable function and $d=\mathbb{E}(W)$. Then $t(P_n,W)\geq d^n$. 
\end{proposition}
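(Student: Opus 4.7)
The plan is to apply the weighted Jensen inequality~(\ref{jen2}) for the concave function $\ln$ with a carefully chosen probability density $f$, and then finish with a second application of Jensen for the convex map $z\mapsto z\ln z$. Set $d(x)=\int W(x,y)\,dy$, so that $\int d(x)\,dx=d$, and take as weight
\[
f(x_0,\dots,x_n)=\frac{d(x_0)}{d}\prod_{i=1}^{n}\frac{W(x_{i-1},x_i)}{d(x_{i-1})},
\]
which is the joint density of the Markov chain on $[0,1]$ whose stationary distribution is $d(x)/d$ and whose transition kernel is $W(x,y)/d(x)$. Iterated integration starting from $x_n$ gives $\mathbb{E}(f)=1$, and the same kind of telescoping (either forward or backward) shows that the marginal of $f$ at each coordinate $x_j$ equals $d(x_j)/d$.

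Writing $g=\prod_{i=1}^{n} W(x_{i-1},x_i)/f$ and applying~(\ref{jen2}) to $c=\ln$ yields
\[
\ln t(P_n,W)=\ln \mathbb{E}(fg)\;\geq\;\mathbb{E}(f\ln g).
\]
The key calculation is that once the explicit formula for $f$ is substituted, every $\ln W(x_{i-1},x_i)$ appearing in $\ln\prod W$ cancels against the matching factor inside $-\ln f$, so that $\ln g$ collapses to $\ln d+\sum_{j=1}^{n-1}\ln d(x_j)$. Using the marginal computation above to integrate each $\ln d(x_j)$ against $f$, one obtains
\[
\ln t(P_n,W)\;\geq\;\ln d+(n-1)\cdot\frac{1}{d}\int_0^1 d(x)\ln d(x)\,dx.
\]

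To close the argument I would invoke Jensen's inequality~(\ref{jen1}) a second time, this time for the convex function $z\mapsto z\ln z$ applied to $d(x)$ on $[0,1]$ with the uniform measure, giving $\int_0^1 d(x)\ln d(x)\,dx\geq d\ln d$, hence $\frac{1}{d}\int d\ln d\geq \ln d$. Combining with the previous display yields $\ln t(P_n,W)\geq n\ln d$, i.e.\ $t(P_n,W)\geq d^n$, as required.

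The main conceptual obstacle is guessing the Markov-chain weight $f$: once its product structure (stationary initial law together with the $W(x,y)/d(x)$ transition kernel) is in place, the cancellation of all the $\ln W$ contributions and the reduction of every interior coordinate to its marginal $d(x_j)/d$ are essentially forced, and the whole argument comes down to the two Jensen inequalities above. The only routine technicality is handling the measure-zero set where $d(x)$ vanishes, which can be dealt with by restricting to its complement or by an obvious approximation.
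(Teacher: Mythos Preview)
Your proof is correct and essentially identical to the paper's: your Markov-chain weight $f$ simplifies (after the telescoping $d(x_0)\cdot d(x_0)^{-1}$) to exactly the paper's density $d^{-1}\prod_{i}d(x_i)^{-1}\prod W$, and both arguments proceed by the same two Jensen steps --- first for $\ln$ against $f$, then for $z\ln z$ against the marginal $d(x)/d$. The only difference is expository: you give the random-walk interpretation of $f$ and phrase the marginal computation in that language, whereas the paper just writes down $f$ and computes $\mathbb{E}_{x_i}(f)$ directly.
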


\begin{proof} Let $d(x)=\mathbb{E}_x(W(x,y))$ and $$f=\Bigl(\prod_{i=1}^n W(x_i,x_{i+1})\Bigr)d^{-1}\Bigl(\prod_{i=2}^n d(x_i)^{-1}\Bigr).$$ We have that $\mathbb{E}(f)=1$.
$$\ln t(P_n,W)=\ln\mathbb{E}(fd\prod_{i=2}^n d(x_i))\geq\ln d+\mathbb{E}(f\ln\prod_{i=2}^nd(x_i))=\ln d+\sum_{i=2}^n\mathbb{E}(f\ln d(x_i))=$$
$$=\ln d+\sum_{i=2}^n\mathbb{E}(\mathbb{E}_{x_i}(f\ln d(x_i)))=\ln d+\sum_{i=2}^nd^{-1}\mathbb{E}(d(x_i)\ln d(x_i))\geq n\ln d.$$
The first inequality follows from (\ref{jen2}) with $c(z)=\ln z$ and the second inequality follows from (\ref{jen1}) with $c(z)=z\ln z$.
\end{proof}

\medskip

\begin{theorem}[Conlon-Fox-Sudakov]\label{CFST} Let $H$ be the (bipartite) graph on the vertex set $\{x,y_1,y_2,\dots,y_m,v_1,v_2,\dots,v_k\}$ such that $x$ is connected to $v_1,v_2,\dots ,v_k$ and $y_t$ is connected to the vertices $S_t\subset\{v_1,v_2,\dots,v_k\}$ where $|S_t|=a_t$.   
Let $e=k+\sum_{t=1}^m a_t$ be the total number of edges in $H$. Then if $W:[0,1]^2\rightarrow\mathbb{R}^+$ is a measurable function and $d=\mathbb{E}(W)$. Then $t(H,W)\geq d^e.$
\end{theorem}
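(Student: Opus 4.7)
The plan is to adapt the Blakley-Roy template: introduce a carefully chosen probability density $f$ and apply Jensen's inequality twice, once for the concave $\ln$ to split $\ln t(H,W)$ into tractable pieces, and once for the convex $z\ln z$ to extract a factor of $\ln d$ from each piece. First I would integrate out the $y_t$'s. Since $y_t$ only touches the $v_j$ for $j \in S_t$, setting $q_t(v_{S_t}) = \mathbb{E}_{y_t}\prod_{j \in S_t} W(y_t, v_j)$ gives
\[
t(H,W) = \mathbb{E}_{x, v_1, \ldots, v_k}\Bigl[\prod_{i=1}^k W(x,v_i) \prod_{t=1}^m q_t(v_{S_t})\Bigr].
\]

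Next, in analogy with Blakley-Roy, I would introduce the density
\[
f = \frac{\prod_i W(x,v_i) \cdot \prod_t \prod_{j \in S_t} W(y_t, v_j)}{t(K_{1,k}, W) \cdot \prod_{t=1}^m q_t(v_{S_t})},
\]
and verify $\mathbb{E}(f) = 1$ by integrating first the $y_t$'s (each giving a factor $q_t$ that cancels), then the $v_i$'s (producing $d(x)^k$), and finally $x$ (producing $t(K_{1,k}, W)$). The identity $\prod_{e} W_e = f \cdot t(K_{1,k}, W)\cdot \prod_t q_t(v_{S_t})$ combined with inequality (\ref{jen2}) for $c(z) = \ln z$ then yields
\[
\ln t(H,W) \;\geq\; \ln t(K_{1,k}, W) \;+\; \sum_{t=1}^m \mathbb{E}\bigl[f \ln q_t(v_{S_t})\bigr].
\]

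It remains to show that each term on the right is at least the number of relevant edges times $\ln d$. For the first term, Jensen's inequality (\ref{jen1}) with the convex $c(z) = z^k$ gives $t(K_{1,k}, W) = \mathbb{E}_x d(x)^k \geq d^k$, so $\ln t(K_{1,k}, W) \geq k \ln d$. For each summand, a direct computation shows that the marginal of $f$ on $v_{S_t}$ equals
\[
A_{S_t}(v_{S_t}) = \frac{\mathbb{E}_x\bigl[d(x)^{k-a_t}\prod_{j \in S_t} W(x, v_j)\bigr]}{t(K_{1,k}, W)},
\]
so $\mathbb{E}[f \ln q_t(v_{S_t})] = \int A_{S_t}(v_{S_t}) \ln q_t(v_{S_t})\, dv_{S_t}$. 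The target is $\geq a_t \ln d$. The key observation is that both $A_{S_t}$ and $q_t$ carry a $\prod_{j \in S_t} W(\cdot, v_j)$ factor integrated against an ``external'' variable ($x$ and $y$ respectively), so they are aligned; combining this with (\ref{jen1}) for the convex $c(z) = z\ln z$ applied to the $y$-integral defining $q_t$, and using $\mathbb{E}_{v_{S_t}}(q_t) = \mathbb{E}_y d(y)^{a_t} \geq d^{a_t}$, should yield the bound. Summing contributions then gives $\ln t(H, W) \geq k \ln d + \sum_t a_t \ln d = e \ln d$.

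The main obstacle is the last step. A naive Jensen application to $\ln q_t$ points the wrong way: the concavity of $\ln$ yields $\mathbb{E}[f \ln q_t] \leq \ln \mathbb{E}[f \, q_t]$, an upper bound. To obtain a lower bound of $a_t \ln d$ one must use the specific form of $A_{S_t}$ — essentially exploiting that the $d(x)^{k-a_t}$ weighting biases toward $x$ of large degree, which in turn biases the $v_j$-marginals toward vertices with large common neighborhoods on the $y$-side. This is precisely the analytic shadow of the dependent random choice argument from the original Conlon-Fox-Sudakov proof, and making it rigorous in the logarithmic calculus is the crux of the verification.
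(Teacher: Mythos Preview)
Your proposal has a genuine gap, which you yourself identify: the final step, showing $\mathbb{E}[f\ln q_t]\geq a_t\ln d$ for your choice of $f$, is never carried out. The difficulty is not cosmetic. With your normalization by $t(K_{1,k},W)$, the marginal $A_{S_t}$ mixes the $x$-variable in a way that does not reduce to a clean $z\ln z$ application; the ``alignment'' you point to between $A_{S_t}$ and $q_t$ is suggestive but does not by itself produce a lower bound, and it is not clear that your stated inequality is even true as phrased, since you have already extracted the possibly-strict gain $\ln t(K_{1,k},W)\geq k\ln d$ elsewhere.

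The paper's proof resolves exactly this by choosing a different weight. Instead of normalizing the star by its global expectation $t(K_{1,k},W)$, it normalizes pointwise in $x$: with $q=d(x)$ one sets
\[
f=d^{-1}q^{1-k}\prod_{i=1}^k W(x,v_i),
\]
so that $\mathbb{E}_x(f)=q/d$ and $\mathbb{E}(f)=1$. The first Jensen step then yields
\[
\ln t(H,W)\geq (k-1)d^{-1}\mathbb{E}(q\ln q)+\ln d+\sum_t\mathbb{E}(f\ln s_t)\geq k\ln d+\sum_t\mathbb{E}(f\ln s_t),
\]
with the $k\ln d$ coming from $z\ln z$ applied to $q$, not from $z^k$. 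The payoff is that the remaining terms are now tractable: writing $f_t=s_t^{-1}q^{a_t-k}\prod_i W(x,v_i)$ (which again has $\mathbb{E}(f_t)=1$) and $h_t=s_t q^{1-a_t}$, one has $d\cdot f=f_t h_t$, $\mathbb{E}_x(f_t h_t)=q$, and
\[
\mathbb{E}(f\ln s_t)=d^{-1}\mathbb{E}(f_t h_t\ln h_t)+(a_t-1)d^{-1}\mathbb{E}(q\ln q)\geq \ln d+(a_t-1)\ln d=a_t\ln d,
\]
each piece by Jensen for the convex $z\ln z$ (the first with weight $f_t$, the second unweighted). The missing idea in your outline is precisely this pointwise-in-$x$ normalization $q^{1-k}$ in place of the global constant $t(K_{1,k},W)^{-1}$; it is what converts the ``analytic shadow of dependent random choice'' into two straightforward convexity steps.
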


\begin{proof} Let $$q=\mathbb{E}_x(W(x,z))~~~{\rm and}~~~f=d^{-1}q^{1-k}\prod_{i=1}^kW(x,v_i),$$ $$s_t=\mathbb{E}_{S_t}\bigl(\prod_{v_j\in S_t}W(z,v_j)\bigr)~~~{\rm and}~~~f_t=s_t^{-1}q^{a_t-k}\prod_{i=1}^k W(x,v_i)~~~(t=1,2,\dots,m).$$ Notice that $\mathbb{E}(f)=\mathbb{E}(f_t)=1$. Using (\ref{jen2}) with $c(z)=\ln z$ and (\ref{jen1}) with $c(z)=z\ln z$ we have $$\ln t(H,W)=\ln\mathbb{E}(f q^{k-1}d\prod_{i=1}^m s_i)\geq \mathbb{E}( f \ln (q^{k-1}d\prod_{i=1}^m s_i))=$$ $$=(k-1)\mathbb{E}(\mathbb{E}_{x}(f\ln q))+\mathbb{E}(f\ln d)+\sum_{i=1}^m\mathbb{E}(f\ln s_i)=$$
$$=(k-1)d^{-1}\mathbb{E}(q\ln q)+\ln d+\sum_{i=1}^m\mathbb{E}(f\ln s_i)\geq k\ln d+\sum_{i=1}^m\mathbb{E}(f\ln s_i).$$
Let $h_t=s_t q^{1-a_t}$. Then by $\mathbb{E}_{x}(f_t h_t)=q$, $\mathbb{E}(f_t h_t)=d$ and (\ref{jen2}),(\ref{jen1}) with $c(z)=z\ln z$, $$\mathbb{E}(f\ln s_t)=d^{-1}\mathbb{E}(f_t h_t\ln h_t)+(a_t-1)d^{-1}\mathbb{E}(\mathbb{E}_{x}(f_t h_t\ln q))=$$
$$=d^{-1}\Bigl(\mathbb{E}(f_th_t\ln h_t)+(a_t-1)\mathbb{E}(q\ln q)\Bigr)\geq a_t\ln d.$$
\end{proof}

\medskip

\noindent{\bf Remark:}~In the proof of theorem \ref{CFST} it is convenient to assume that $W$ is strictly positive. Then using the continuity of $t(H,W)$ under $L^1$ convergence we obtain the Sidorenko inequality for every non-negative $W$. However it is a useful observation that with some extra caution all the steps of the proof work for non-negative functions. The reason for this is that if we replace all the weight function $f$ and $f_t$ occurring in the proof by modified versions $f'$ and $f_t'$ in which values that are not defined are replaced by $0$ then the same calculations remain true. This is a simple case by case checking.

\medskip 

\begin{theorem}[Forcing for C-F-S graphs] The forcing conjecture holds for bipartite graphs in which one vertex is complete to the other side (and are not trees).
\end{theorem}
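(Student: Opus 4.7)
The plan is to revisit the proof of Theorem \ref{CFST} and extract its equality conditions. Assume $t(H,W)=d^e$ with $d=\mathbb{E}(W)>0$. Since $H$ is not a tree, counting edges against vertices gives $\sum_t a_t\geq m+1$, so some index $t_0$ has $a_{t_0}\geq 2$. We first reduce to the case $W>0$ almost everywhere by following the paper's Remark; then every Jensen step in the proof of Theorem \ref{CFST} becomes an equality on the full space.

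Two equalities are decisive. First, the estimate $(a_{t_0}-1)\mathbb{E}(q\ln q)\geq (a_{t_0}-1)d\ln d$ enters through the step $\mathbb{E}(f\ln s_{t_0})\geq a_{t_0}\ln d$; strict convexity of $z\mapsto z\ln z$, together with $a_{t_0}-1\geq 1$, forces the degree function $q\equiv d$ almost everywhere. Second, equality in $\mathbb{E}(f_{t_0}h_{t_0}\ln h_{t_0})\geq d\ln d$ (again $z\ln z$ via (\ref{jen2})) forces $h_{t_0}=s_{t_0}q^{1-a_{t_0}}$ to be constant on the support of $f_{t_0}$; since $\mathbb{E}(f_{t_0}h_{t_0})=d$, that constant equals $d$, and substituting $q\equiv d$ yields
$$s_{t_0}(v_{i_1},\dots,v_{i_{a_{t_0}}})=\int W(z,v_{i_1})\cdots W(z,v_{i_{a_{t_0}}})\,dz=d^{a_{t_0}}$$
almost everywhere.

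Integrating out all but two of the $v$-variables and using $\int W(z,v)\,dv=q(z)=d$, this reduces to $\int W(z,u)W(z,u')\,dz=d^2$ for almost every $(u,u')$. Viewing $W$ as the kernel of a self-adjoint compact operator $T_W$ on $L^2([0,1])$, this identity says $T_W^2=d^2 P_{\mathbf{1}}$, where $P_{\mathbf{1}}$ is the orthogonal projection onto the constants. Since $\ker T_W=\ker T_W^2$ for self-adjoint operators, $T_W$ vanishes on $\mathbf{1}^\perp$; combined with $T_W\mathbf{1}=d\mathbf{1}$ (from $q\equiv d$), we conclude $T_W=dP_{\mathbf{1}}$, i.e., $W\equiv d$ almost everywhere.

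The main obstacle I anticipate is the degenerate case where $W$ vanishes on a set of positive measure: Jensen equality only pins down the relevant integrand on the support of the weight function, so $q\equiv d$ and $s_{t_0}\equiv d^{a_{t_0}}$ may initially hold only on such supports. The paper's Remark indicates that the same type of case analysis used to prove Theorem \ref{CFST} for nonnegative $W$ should resolve this.
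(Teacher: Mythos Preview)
Your proposal contains one genuine gap and one genuine difference from the paper.

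\medskip
\noindent\textbf{The gap: there is no reduction to $W>0$.} The Remark after Theorem~\ref{CFST} does \emph{not} allow you to assume $W>0$ in a forcing argument. Continuity in $L^1$ transports the inequality $t(H,W)\ge d^e$ from positive $W$ to nonnegative $W$, but it cannot transport the implication ``$t(H,W)=d^e\Rightarrow W$ constant'': the hypothesis $t(H,W)=d^e$ is about a \emph{specific} $W$, and approximating it by positive graphons destroys the equality. The Remark's actual content is that the calculation in Theorem~\ref{CFST} runs verbatim for $W\ge 0$ once the weights $f,f_t$ are replaced by $f',f'_t$ (undefined values set to $0$); but then, as you yourself note at the end, the Jensen equality cases only pin down $h_{t_0}$ on the support of $f'_{t_0}$, so you only get $s_{t_0}=d^{a_{t_0}}$ on that support, not almost everywhere. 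Your proof stops exactly here.

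The paper closes this gap with two short observations you are missing. First, since $q\equiv d>0$, the support of $f'_{t_0}$ coincides with the support of $\prod_{i=1}^k W(x,v_i)$; and if $s_{t_0}(v_{S_{t_0}})>0$ at some point, one can choose $x$ and the remaining $v_i$ so that $\prod_i W(x,v_i)>0$ with those $S_{t_0}$-coordinates. Hence $s_{t_0}\in\{0,d^{a_{t_0}}\}$ almost everywhere. Second, stars are Sidorenko, so $\mathbb{E}(s_{t_0})=t(K_{1,a_{t_0}},W)\ge d^{a_{t_0}}$, which rules out $s_{t_0}=0$ on a set of positive measure. This yields $s_{t_0}\equiv d^{a_{t_0}}$ everywhere, and from here your argument (or the paper's) finishes.

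\medskip
\noindent\textbf{The different endgame.} Once $s_{t_0}\equiv d^{a_{t_0}}$ is established, the paper computes $t(K_{2,a_{t_0}},W)=\mathbb{E}(s_{t_0}^2)=d^{2a_{t_0}}$ and invokes the known fact that $K_{2,a}$ is forcing for $a\ge 2$. Your route---integrate down to $\int W(z,u)W(z,u')\,dz\equiv d^2$, read this as $T_W^2=d^2 P_{\mathbf 1}$, and use self-adjointness to conclude $T_W=dP_{\mathbf 1}$---is correct and is essentially a self-contained proof that $C_4$ is forcing. It is a perfectly good alternative; the paper's version is shorter because it outsources this step, while yours avoids the external citation.
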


\begin{proof} We use the notation from the proof of theorem \ref{CFST} . In this proof we allow $W$ to take the value $0$. Let $f'$ and $f'_t$ be as in the remark following the proof of Theorem \ref{CFST}. Assume that $t(H,W)=d^e$. Then in the proof of theorem \ref{CFST} all the inequalities become equalities. 
We immediately obtain that $\mathbb{E}(q\ln q)=d\ln d$ and thus $q$ has to be constant $d$ on $[0,1]$.
If $H$ is not a tree then there is a number $1\leq j\leq k$ such that $a_j\geq 2$.
We obtain that $\mathbb{E}(f'_jh_j\ln h_j)=d\ln d$ and thus $h_j$ has to be constant $d$ on the support of $f'_j$.  This implies that $s_j$ is constant $d^{a_t}$ on the support of $f'_j$ which is equal to the support of $\prod_{i=1}^kW(x,v_i)$.
This implies that if the value of $s_j$ is not $0$ then it is equal to $d^{a_t}$. On the other hand (since spiders are Sidorenko) we have $\mathbb{E}(s_t)\geq d^{a_t}$ and thus $s_j$ is constant $d^{a_t}$. This implies that $t(K_{2,a_t},W)=\mathbb{E}(s_t^2)=d^{2a_t}$. Using that $K_{2,a_t}$ is forcing the proof is complete.  
\end{proof}

\section{Smoothness and Gluing}\label{smr}

In general, we consider the logarithmic calculus as a symbolic way of proving inequalities between subgraph densities using conditional expectations and Jensen's inequality for  $z\ln z$ and $\ln z$.
In this chapter we give a demonstration by proving Sidorenko's conjecture for a family of bipartite graphs.
However this is not the limitation of the method inside Sidorenko's conjecture and there are many applications outside Sidorenko's conjecture. Further applications will be discussed in a subsequent paper.

The proof of theorem \ref{CFST} relies on the interesting phenomenon that certain bipartite graphs satisfy stronger inequalities than Sidorenko's.  Note that the form of these new types of inequalities hints at useful reinterpretation as geometric averages (detailed later).  More specifically we showed that if $H$ is the graph on the vertex set $\{x, v_1,...,v_k, y\}$, where $x$ is connected to $v_1,\dots ,v_k$ and $y$ is connected to a subset $S$ of $\{v_1,v_2,\dots,v_k\}$ then it satisfies the inequality
\begin{equation}\label{smooth1}
\mathbb{E}\Bigl(d^{-1}q^{n-1}\prod_{i=1}^nW(x,v_i)\ln s\Bigr)\geq |S|\ln d
\end{equation}
where $s=\mathbb{E}_S\Bigl(\prod_{v_j\in S}W(y,v_j)\bigr)$ and $d$ and $q$ are as in the proof of the theorem.
This inequality (\ref{smooth1}) allows one to glue together such graphs $H$ with various choices of $S$ along the star spanned on $\{x,v_1,v_2,\dots,v_k\}$ and the resulting graph will still be Sidorenko.  
Through this operation we can build the type of graphs considered by Sudakov, Conlon and Fox.  This indicates that (\ref{smooth1}) type inequalities can be used to produce new Sidorenko graphs by gluing.  Our goal is to generalize this situation.  For ease of expression, we first introduce restricted subgraph densities.  

Let $\mathcal{G}_n$ denote the set of graphs in which $n$ different vertices are labeled by the numbers $\{1,2,\dots,n\}$.
If $H_1$ and $H_2$ are in $\mathcal{G}_n$ then their product $H_1H_2$ is defined as the graph obtained form them by identifying vertices with the same label and then reducing multiple edges. The notion of subgraph density can be naturally extended for graphs in $\mathcal{G}_n$. 

\begin{definition}[Restricted subgraph density]
Let $H\in\mathcal{G}_n$ be a graph on the vertex set $\{x_1,x_2,\dots,x_m\}$ such that the labeled vertices are $S=\{x_1,x_2,\dots,x_n\}$. Then the restricted subgraph density of $H$ in $W:[0,1]^2\rightarrow\mathbb{R}$ is the $n$-variable function defined as
\begin{equation}\label{dens2}
t_S(H,W)=\mathbb{E}_S\bigl(\prod_{(x_i,x_j)\in E(H)}W(x_i,x_j)\bigr).
\end{equation}  
\end{definition}

\begin{lemma}\label{treeweight} Let $T$ be a tree on the vertex set $\{x_1,x_2,\dots,x_n\}$ and let 
\begin{equation}\label{weight}
f_T=d^{-1}\prod_{i=1}^n d(x_i)^{1-r_i}\prod_{(x_i,x_j)\in E(T)}W(x_i,x_j).
\end{equation}
Then for an arbitrary $1\leq i\leq n$ we have that $\mathbb{E}_{x_i}(f_T)=d(x_i)/d$ and consequently $\mathbb{E}(f_T)=1$.
\end{lemma}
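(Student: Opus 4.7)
The plan is to prove the first identity $\mathbb{E}_{x_i}(f_T) = d(x_i)/d$ by induction on the number of vertices $n$ of $T$, using a leaf-peeling argument. The base case $n = 1$ is immediate: the tree has no edges, $r_i = 0$, so $f_T = d^{-1} d(x_i)$ and $\mathbb{E}_{x_i}(f_T) = f_T = d(x_i)/d$.

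For the inductive step, fix the distinguished vertex $x_i$. Since a tree on at least two vertices has at least two leaves, one may pick a leaf $x_\ell \neq x_i$ with unique neighbor $x_p$. The only factors of $f_T$ depending on $x_\ell$ are $d(x_\ell)^{1 - r_\ell} = d(x_\ell)^0 = 1$ and the edge weight $W(x_\ell, x_p)$, so integrating $x_\ell$ out (using the symmetry of $W$ together with the definition $d(y) = \int W(y,z)\, dz$) produces a factor $d(x_p)$. Combining this with the preexisting $d(x_p)^{1 - r_p}$ in $f_T$ gives $d(x_p)^{2 - r_p} = d(x_p)^{1 - (r_p - 1)}$, which is exactly the exponent demanded by the weight formula on the smaller tree $T' = T \setminus \{x_\ell\}$, in which $x_p$ has degree $r_p - 1$. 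No other $d(x_j)$ exponent or edge weight is affected, so the partial integration yields precisely $f_{T'}$, and the inductive hypothesis for $T'$ (with the same distinguished vertex $x_i$) completes the step.

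The second claim follows at once by integrating the first identity over $x_i$: $\mathbb{E}(f_T) = \int \mathbb{E}_{x_i}(f_T)\, dx_i = d^{-1} \int d(x_i)\, dx_i = d^{-1} \mathbb{E}(W) = 1$.

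The only real hurdle is the degree-exponent bookkeeping in the inductive step, encapsulated by the identity $(1 - r_p) + 1 = 1 - (r_p - 1)$; once this is spotted, the leaf recursion runs itself. Beyond this, the argument only relies on the standard fact that every tree on $n \geq 2$ vertices has at least two leaves, which guarantees that a leaf $x_\ell \neq x_i$ is always available at each stage of the recursion.
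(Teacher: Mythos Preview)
Your proof is correct and follows the same leaf-peeling induction as the paper's own argument. You have simply made explicit the bookkeeping (the exponent identity $(1-r_p)+1=1-(r_p-1)$ and the existence of a leaf distinct from $x_i$) that the paper leaves implicit in the line $\mathbb{E}_{x_i}(\mathbb{E}_S(f_T))=\mathbb{E}_{x_i}(f_{T'})$.
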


\begin{proof} If $n=1$ then the statement is trivial. By induction assume that it holds for $n-1$. Assume that $n>1$ and $x_j\neq x_i$ is a leaf in $T$. Let $S=\{x_1,\dots,x_n\}\setminus\{x_j\}$. Then $\mathbb{E}_{x_i}(f_T)=\mathbb{E}_{x_i}(\mathbb{E}_S(f_{T}))=\mathbb{E}_{x_i}(f_{T'})$ where $T'$ is obtained from $T$ by deleting $x_j$ and thus the induction step finishes the proof.  
\end{proof}

We are now ready to define the notion of smoothness. 

\begin{definition}[Smoothness] Let $H\in\mathcal{G}_n$ be a bipartite graph on the vertex set $\{x_1,x_2,\dots,x_m\}$ such that the spanned subgraph on $S=\{x_1,x_2,\dots,x_n\}$ is a tree $T$. We say that $H$ is smooth (or $T$ is smooth in $H$) if
$$\mathbb{E}\Bigl(f_T~\ln t_S(H^*,W)\Bigr)\geq |E(H^*)|\ln d$$
where $H^*$ is the graph obtained from $H$ by deleting the edges in $T$. 
\end{definition}

Note that if $n=0$ then $T$ is empty. In this case the smoothness of $H$ is equivalent with the Sidorenko property. The next two lemmas together show that smoothness in general, is a strengthening of the Sidorenko property. 

\begin{lemma}[Unlabeling]\label{unlab} Let $H\in\mathcal{G}_n$ be a smooth bipartite graph with tree $T$ spanned on the labeled vertices $S$ and let $T'$ be a non-empty sub-tree spanned on $S'\subset S$. Then the graph $H_2$ obtained from $H$ by unlabeling the vertices in $S\setminus S'$ is also smooth. 
\end{lemma}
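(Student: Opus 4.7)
The plan is to induct on $|S \setminus S'|$, reducing to a single-step unlabeling claim: \emph{if $x$ is a leaf of $T$ with unique neighbor $y$, and $H_1$ denotes $H$ with $x$ unlabeled, then $H_1$ is smooth with spanning tree $T_1 = T \setminus \{x\}$ on $S_1 = S \setminus \{x\}$.} The iteration closes because whenever $S \setminus S' \neq \emptyset$, the forest $T \setminus E(T')$ has a component containing a non-root vertex; any leaf of that component which is not the root in $S'$ is automatically a leaf of $T$ lying in $S \setminus S'$, and removing it leaves $T'$ still spanning a subtree of the remaining tree, so the hypothesis of the single-step claim is preserved along the iteration.

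For the single-step claim, observe that $H_1^* = H^* \cup \{(x,y)\}$, so $|E(H_1^*)| = |E(H^*)| + 1$ and $t_{S_1}(H_1^*, W) = \mathbb{E}_x\bigl(W(x,y)\, t_S(H^*, W)\bigr)$. A direct calculation from (\ref{weight}), using $r_x^T = 1$ and $r_y^T = r_y^{T_1} + 1$, yields the crucial identity $f_T = f_{T_1} \cdot W(x,y)/d(y)$. Now set $p(x) = W(x,y)/d(y)$; since $\mathbb{E}_x(p) = 1$, applying Jensen's inequality (\ref{jen2}) with the concave $c(z) = \ln z$ gives $\ln t_{S_1}(H_1^*, W) \geq \ln d(y) + \mathbb{E}_x\bigl(p(x)\, \ln t_S(H^*, W)\bigr)$.

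Multiplying by $f_{T_1}$ and integrating, the second term collapses to $\mathbb{E}(f_T \ln t_S(H^*, W))$, which is $\geq |E(H^*)| \ln d$ by the smoothness of $H$. The first term $\mathbb{E}(f_{T_1} \ln d(y))$ equals $d^{-1}\mathbb{E}_y(d(y) \ln d(y))$ by Lemma \ref{treeweight}, and is $\geq \ln d$ by Jensen (\ref{jen1}) applied to the convex function $z \ln z$. Summing gives $\mathbb{E}(f_{T_1} \ln t_{S_1}(H_1^*, W)) \geq (|E(H^*)|+1)\ln d = |E(H_1^*)|\ln d$, the required smoothness of $H_1$. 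The main point requiring care is the choice of the density $p$: it must align with $f_{T_1}$ so that their product is exactly $f_T$, since this is what allows the smoothness hypothesis for $H$ to be invoked with no leftover error; the residual $\ln d(y)$ term then contributes exactly the one $\ln d$ needed to match the single new edge $(x,y)$, making the induction close cleanly.
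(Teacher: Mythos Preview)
Your proof is correct and follows essentially the same approach as the paper: reduce to unlabeling a single leaf, then use the factorization $f_T = f_{T'}\cdot W(x,y)/d(y)$ together with Jensen for $\ln z$ (to pass the logarithm inside $\mathbb{E}_x$) and Jensen for $z\ln z$ (to bound $\mathbb{E}(f_{T'}\ln d(y))$ via Lemma~\ref{treeweight}). Your write-up is in fact slightly more careful than the paper's in justifying the iteration step (existence of a leaf of $T$ in $S\setminus S'$), which the paper simply asserts.
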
 

\begin{proof} It is enough to prove that unlabeling one leaf in $H$ preserves smoothness. Every other case can be obtained by iterating this step. Assume that $x_n$ is a leaf connected to $X_{n-1}$.
We have that
$$\mathbb{E}\Bigl(f_{T'}~\ln t_{S'}(H_2^*,W)\Bigr)=\mathbb{E}\Bigl(f_{T'}~\ln\mathbb{E}_{x_n}(W(x_n,x_{n-1})t_S(H^*,W))\Bigr)=$$
$$=\mathbb{E}\Bigl(f_{T'}~\ln\mathbb{E}_{x_n}(W(x_n,x_{n-1})d(x_{n-1})^{-1}d(x_{n-1})t_S(H^*,W))\Bigr)\geq$$
$$\mathbb{E}(f_{T'}~\ln d(x_{n-1}))+\mathbb{E}(f_{T}~\ln t_S(H^*,W))\geq\mathbb{E}(f_{T'}~\ln d(x_{n-1}))+|E(H^*)|\ln d.$$
In the above calculation we use the concavity of $z\rightarrow\ln (z)$ with weight function $W(x_n,x_{n-1})d(x_{n-1})^{-1}$.
Using lemma \ref{treeweight} we get 
$$\mathbb{E}(f_{T'}~\ln d(x_{n-1}))=\mathbb{E}(\mathbb{E}_{x_{n-1}}(f_{T'}~\ln d(x_{n-1})))=d^{-1}\mathbb{E}(d(x_{n-1})\ln d(x_{n-1}))\geq\ln d.$$
\end{proof}

\begin{lemma}\label{edgesid} Assume that $(x_1,x_2)\in E(H)$ and $H\in\mathcal{G}_2$ is smooth. Then $H$ is a Sidorenko graph.
\end{lemma}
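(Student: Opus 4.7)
The plan is short and uses only the concavity of $\ln$ together with the smoothness hypothesis. First I would decode what smoothness says in this special setup. Since $H \in \mathcal{G}_2$ has exactly two labeled vertices $x_1, x_2$ and the edge $(x_1,x_2)$ lies in $E(H)$, the spanned tree $T$ on $S=\{x_1,x_2\}$ is the single edge $P_2$. Each labeled vertex has degree $r_i = 1$ in $T$, so the weight function from Lemma \ref{treeweight} collapses to
$$f_T = d^{-1}\, W(x_1,x_2),$$
which indeed satisfies $\mathbb{E}(f_T)=1$.

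Next I would split $E(H) = E(T) \sqcup E(H^*)$ and integrate out the unlabeled vertices first, which gives the identity
$$t(H,W) \;=\; \mathbb{E}\bigl(W(x_1,x_2)\, t_S(H^*,W)\bigr) \;=\; d\cdot \mathbb{E}\bigl(f_T\, t_S(H^*,W)\bigr).$$
Taking logarithms and applying Jensen's inequality (\ref{jen2}) with $c(z)=\ln z$ and weight $f_T$ yields
$$\ln t(H,W) \;\geq\; \ln d + \mathbb{E}\bigl(f_T\,\ln t_S(H^*,W)\bigr).$$
Finally the smoothness assumption bounds the second term by $|E(H^*)|\ln d$, so that
$$\ln t(H,W) \;\geq\; (1 + |E(H^*)|)\ln d \;=\; |E(H)|\ln d,$$
which is exactly the Sidorenko property $t(H,W)\geq d^{|E(H)|}$.

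There is no real obstacle in the argument; the main thing to verify is the degree-$1$ reduction of $f_T$ and the alignment of the edge count. The only technical caveat is the usual one that $W$ may vanish on a set of positive measure, so that $\ln t_S(H^*,W)$ is not a priori defined; this is handled exactly as in the remark following Theorem \ref{CFST}, either by replacing $f_T$ by its modified version $f_T'$ that is set to $0$ where needed, or by first proving the inequality for strictly positive $W$ and then passing to a general non-negative $W$ by $L^1$-approximation using continuity of $t(H,\cdot)$.
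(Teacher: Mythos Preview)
Your proof is correct and follows essentially the same route as the paper: identify $f_T=d^{-1}W(x_1,x_2)$, factor $t(H,W)=d\,\mathbb{E}(f_T\,t_S(H^*,W))$, apply Jensen for $\ln$, and invoke smoothness. Your added explanation of why $f_T$ reduces to $d^{-1}W(x_1,x_2)$ and your remark on handling non-positive $W$ are a bit more explicit than the paper's two-line version, but the argument is the same.
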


\begin{proof} We have that $$\ln\mathbb{E}(t(H,W))=\ln d+\ln\mathbb{E}(W(x_1,x_2)d^{-1}t(H^*,W))\geq$$
$$\geq \ln d+\mathbb{E}(W(x_1,x_2)d^{-1}\ln t(H^*,W))\geq \ln d+|E(H^*)|\ln d=|E(H)|\ln d.$$
\end{proof}

Lemma \ref{unlab} and lemma \ref{edgesid} together imply the following important corollary.

\begin{corollary} Let $H\in\mathcal{G}_n$ be a smooth graph. Then $H$ is Sidorenko.
\end{corollary}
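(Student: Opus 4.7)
The plan is to apply Lemma \ref{unlab} and Lemma \ref{edgesid} in sequence. The case $n=0$ is already immediate from the remark following the definition of smoothness, which observes that with $T$ empty, smoothness coincides with the Sidorenko inequality. For $n \geq 2$, the tree $T$ on the labeled set $S$ must contain at least one edge $(x_i, x_j)$. I would set $S' = \{x_i, x_j\}$ and let $T'$ be the single edge joining them, then invoke Lemma \ref{unlab} to produce a smooth graph $H_2 \in \mathcal{G}_2$ obtained from $H$ by unlabeling every vertex of $S$ other than $x_i$ and $x_j$. Since unlabeling alters neither the vertex set nor the edge set of the underlying graph, the edge $(x_i, x_j)$ persists in $H_2$, so the hypothesis of Lemma \ref{edgesid} is satisfied and $H_2$ is Sidorenko.

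To close the argument I would note that the density $t(\cdot, W)$ defined by (\ref{dens}) is independent of any labeling, so $t(H, W) = t(H_2, W) \geq d^{|E(H)|}$, which is exactly the Sidorenko inequality for $H$. This is essentially a one-step reduction and I do not expect any real obstacle; all the substantive work has already been absorbed into the two preceding lemmas. The only piece of bookkeeping is to verify that an edge of the subtree $T$ really does descend to a labeled edge of $H_2$, and this is immediate from the definitions. The borderline case $n = 1$ (a single labeled vertex, $T$ having no edges) falls outside the two-step reduction and would need a separate short Jensen argument with weight $f_T = d(x_1)/d$; this degenerate configuration is however not the one in which smoothness is actually used in the subsequent gluing constructions.
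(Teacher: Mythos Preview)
Your proposal is correct and follows exactly the route the paper indicates: the paper's entire proof is the single sentence ``Lemma \ref{unlab} and lemma \ref{edgesid} together imply the following important corollary,'' and your argument spells out precisely this reduction. You are in fact more careful than the paper in flagging the $n=1$ boundary case, which the paper passes over in silence; your observation that this degenerate configuration plays no role in the subsequent gluing constructions is the honest way to dispose of it.
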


The next lemma shows how to produce now Sidorenko graphs from old ones using smoothness. 

\begin{lemma}[Gluing on smooth trees]\label{gluing} Let $H_1,H_2\in\mathcal{G}_n$ be two smooth graphs such that the trees spanned on the labeled vertices are identical with $T$ in both graphs. Then $H=H_1H_2\in\mathcal{G}_n$ is also smooth.
\end{lemma}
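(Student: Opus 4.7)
The plan is to reduce the smoothness of $H$ directly to the smoothness of $H_1$ and $H_2$ by observing that the restricted density $t_S(H^*,W)$ factors multiplicatively across the two halves. Since the only vertices identified when forming the product $H_1H_2$ are the labeled ones in $S$, the unlabeled vertex sets $U_1$ of $H_1$ and $U_2$ of $H_2$ sit disjointly inside $H$. Moreover, because the definition of smoothness demands that the full spanned subgraph on $S$ in each $H_i$ be exactly $T$, every edge of $H_i^*$ must have at least one endpoint in $U_i$. Consequently no edge of $H_1^*$ can coincide with any edge of $H_2^*$, the ``reduce multiple edges'' step in forming $H_1H_2$ removes nothing outside $T$, and $|E(H^*)|=|E(H_1^*)|+|E(H_2^*)|$.

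Once this bookkeeping is in place, the rest is essentially a Fubini computation. The integrand defining $t_S(H^*,W)$ splits as a product of a factor involving only the variables in $S\cup U_1$ and a factor involving only $S\cup U_2$, so integrating out $U_1$ and $U_2$ separately gives
$$t_S(H^*,W)\;=\;t_S(H_1^*,W)\cdot t_S(H_2^*,W).$$
Taking logarithms turns this into a sum. Now the key convenience is that the weight $f_T$ depends only on the labeled vertices and on the tree edges of $T$, which are identical in $H_1$, $H_2$, and $H$, so the same $f_T$ serves as the weight in all three smoothness statements. Multiplying the logarithm identity by $f_T$ and taking expectation yields
$$\mathbb{E}\bigl(f_T\,\ln t_S(H^*,W)\bigr)\;=\;\mathbb{E}\bigl(f_T\,\ln t_S(H_1^*,W)\bigr)\;+\;\mathbb{E}\bigl(f_T\,\ln t_S(H_2^*,W)\bigr),$$
and applying the smoothness hypothesis to $H_1$ and $H_2$ bounds the right-hand side below by $(|E(H_1^*)|+|E(H_2^*)|)\ln d = |E(H^*)|\ln d$, which is exactly the smoothness of $H$.

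I do not expect a genuine obstacle here; the logarithmic-calculus work has already been absorbed into the hypotheses on $H_1$ and $H_2$, and the gluing step invokes no new Jensen inequality — it is purely $\ln(ab)=\ln a+\ln b$ together with linearity of expectation. The one point that must be checked with care is the edge-disjointness accounting above, because smoothness is a sharp numerical inequality and any overcount or undercount of $|E(H^*)|$ by even one would spoil the conclusion; but the requirement in the definition that the spanned subgraph on $S$ equal $T$ exactly is precisely what rules out any collision between edges of $H_1^*$ and $H_2^*$.
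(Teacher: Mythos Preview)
Your proof is correct and is essentially the same as the paper's: both rely on the factorization $t_S(H^*,W)=t_S(H_1^*,W)\,t_S(H_2^*,W)$, then use $\ln(ab)=\ln a+\ln b$, linearity, and the two smoothness hypotheses. You have simply spelled out the edge-disjointness bookkeeping that the paper leaves implicit.
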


\begin{proof} 
$$\mathbb{E}(f_T\ln t_S(H^*,W))=\mathbb{E}(f_T\ln t_S(H_1^*,W)t_S(H_2^*,W))=\mathbb{E}(f_T\ln t_S(H_1^*,W))+\mathbb{E}(f_T\ln t_S(H_2^*,W))$$
$$\geq (|E(H_1)|-n+1)\ln d+(|E(H_2)|-n+1)\ln d=(|E(H_1H_2)|-n+1)\ln d.$$
\end{proof}

\begin{lemma}[Extension of smooth part]\label{ext} Let $H\in\mathcal{G}_n$, $(n>1)$ be a smooth graph and let $T'\in\mathcal{G}_n$ be a tree such that $H$ and $T'$ induce the same tree $T$ on the labeled points. Then the graph $H_2$ obtained from $HT'$ by putting labels on all the vertices in $T'$ is again smooth.
\end{lemma}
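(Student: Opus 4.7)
The plan is to reduce smoothness of $H_2$ to smoothness of $H$ by integrating out the newly labeled vertices, exploiting the fact that those vertices do not appear in the ``deleted'' part $H_2^*$.

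First I would do the combinatorial bookkeeping. Let $S_2 = V(T')$ be the enlarged label set. Since $H$ induces $T$ on $S$ and $T \subseteq T'$, the spanned subgraph of $HT'$ on $S_2$ is exactly $T'$, so the relevant tree for $H_2$ is $T'$. The edges of $H_2^* = H_2 \setminus T'$ are precisely the edges of $H$ not in $T$, i.e.\ $E(H_2^*) = E(H^*)$, and these live on $V(H)$ only. In particular $t_{S_2}(H_2^*,W) = t_S(H^*,W)$ as a function of the variables in $S$, and $|E(H_2^*)| = |E(H^*)|$.

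The central identity is then
\begin{equation*}
\mathbb{E}_{S}(f_{T'}) \;=\; f_T,
\end{equation*}
where $\mathbb{E}_S$ integrates over the variables in $V(T') \setminus S$. I would prove this by iterated leaf peeling. While $V(T') \setminus S$ is non-empty, consider $T' \setminus S$: because $T'$ is a tree, each connected component of this forest attaches to $T$ through a single edge, so each component contributes at least one leaf of $T'$ lying in $V(T') \setminus S$. Choosing such a leaf $y$ adjacent to some $z$ in $T'$, the factor $d(y)^{1-r_y} = d(y)^0 = 1$ drops out of $f_{T'}$ and $\int W(y,z)\,dy = d(z)$; combining this with the existing $d(z)^{1-r_z}$ produces $d(z)^{1-(r_z-1)}$, which matches the degree of $z$ in $T' - y$. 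All other factors are unaffected, so $\int_y f_{T'}\,dy = f_{T'-y}$. Iterating removes all of $V(T') \setminus S$ and leaves exactly $f_T$.

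Given this identity, since $\ln t_S(H^*,W)$ depends only on variables in $S$, Fubini yields
\begin{equation*}
\mathbb{E}\bigl(f_{T'}\,\ln t_{S_2}(H_2^*,W)\bigr)
= \mathbb{E}\bigl(\mathbb{E}_S(f_{T'})\,\ln t_S(H^*,W)\bigr)
= \mathbb{E}\bigl(f_T\,\ln t_S(H^*,W)\bigr)
\geq |E(H^*)|\ln d,
\end{equation*}
where the inequality is the smoothness of $H$. Since $|E(H^*)| = |E(H_2^*)|$, this is precisely the smoothness of $H_2$. The only substantive step is the leaf-peeling identity, which is essentially an elaboration of Lemma \ref{treeweight}; I do not anticipate any real obstacle, since one just has to verify that the exponents of $d(\cdot)$ in $f_{T'}$ transform correctly under removal of a leaf, which is a direct computation.
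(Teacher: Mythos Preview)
Your proof is correct and follows essentially the same route as the paper's: both reduce the smoothness of $H_2$ to that of $H$ by integrating out the new leaf vertices one at a time, using that $H_2^*=H^*$ does not involve those vertices and that $\mathbb{E}_S(f_{T'})=f_T$. The only cosmetic difference is that the paper phrases the induction as ``add one leaf and iterate the lemma,'' whereas you carry out the full leaf-peeling to establish $\mathbb{E}_S(f_{T'})=f_T$ directly before invoking smoothness of $H$.
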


\begin{proof} It is enough to prove the statement for the case where $y=V(T')\setminus V(T)$ is a single vertex which is a leaf in $T'$. The general case is an iteration of this step. Without loss of generality assume that $y$ is connected to $x_n$.
Let $S=\{x_1,x_2,\dots,x_n\}$, $S'=S\cup\{y\}$ and $s=t_{S'}(H_2^*,W)$.
In this case
$$\mathbb{E}(f_{T'}\ln s)=\mathbb{E}(\mathbb{E}_S(f_{T'}\ln s))=\mathbb{E}(f_T~\ln s)=\mathbb{E}(f_T~\ln~t_S(H^*,W))\geq$$
$$\geq |H^*|\ln d=|H_2^*|\ln d.$$
\end{proof}

\begin{definition}[Reflection] Reflection of a subgraph $H_2$ induced on $K\subset V(H)$ in $H$ along an independent set $S\subset K$ is the operation which produces the graph $HH_2$ where the vertices in $K$ are labeled in both $H$ and $H_2$ in the same way.
\end{definition}

\begin{lemma}\label{refl} Let $T\in\mathcal{G}_n$ be a tree such that the labeled points $S$ are independent. Let $H$ be the graph obtained from $T^2$ by labeling all points in one copy of $T$. Then $H$ is smooth.
\end{lemma}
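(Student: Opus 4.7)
The plan is to proceed by induction on $|V(T)|$, using three structural reductions and Lemmas \ref{ext}, \ref{gluing} wherever possible. The base case $|V(T)|\le 2$ reduces directly to Lemma \ref{treeweight} combined with Jensen's inequality for $z\ln z$ (exactly as in the Blakley--Roy proof). For the inductive step I would distinguish three cases depending on the structure of $T$ relative to $S$.

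Case A ($S=\emptyset$): then $T^2$ is the disjoint union of two copies of $T$, so $t_{S'}(H^*,W)$ equals the constant $t(T,W)$; smoothness is then immediate from the fact that trees are Sidorenko. Case B ($T-S$ disconnected): decompose $T$ along the components $C_1,\ldots,C_r$ ($r\ge 2$) of $T-S$, setting $T_j=T[C_j\cup\partial C_j]$ where $\partial C_j\subseteq S$ is the $S$-boundary of $C_j$. Since $S$ is independent and $T$ is a tree, a short verification gives $E(T)=\bigsqcup_j E(T_j)$ and each $T_j$ is a strictly smaller tree in which $\partial C_j$ is independent, so by induction each reflection $H_j$ is smooth. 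Apply Lemma \ref{ext} with $T$ itself (viewed in $\mathcal{G}_{|V(T_j)|}$ with $V(T_j)$ labeled) as the extension tree: the hypothesis holds because $T$ induces exactly $T_j$ on $V(T_j)$ by independence of $S$, producing a smooth $H_j^{\mathrm{ext}}\in\mathcal{G}_{|V(T)|}$ with labeled tree $T$. Apply Lemma \ref{gluing} to $\prod_j H_j^{\mathrm{ext}}$; by construction the labeled tree is $T$ and the $H^*$-component equals the disjoint union of the second copies of the $T_j$'s, which is precisely the second copy of $T$, so the product equals $H$.

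Case C ($T-S$ connected and $S\neq\emptyset$): a degree count shows that every $s\in S$ must be a leaf of $T$. Indeed $T$ has $|S|+|U_1|-1$ edges, $|U_1|-1$ of which lie inside $T[U_1]$, leaving $|S|$ edges distributed among $|S|$ vertices each of degree $\ge 1$, forcing each degree to equal one. Pick such a leaf $s$ with unique neighbor $v\in U_1$ and apply the inductive hypothesis to $T'=T-s$ to obtain a smooth reflection $H'$. Apply Lemma \ref{ext} to add $s$ as a new labeled vertex together with the first-copy edge $(s,v_1)$, producing a smooth intermediate $H''$ whose labeled tree is the first copy of $T$. What remains is to insert the second-copy edge $(s,v_2)$, where $v_2$ is the already-present unlabeled copy of $v$; this insertion takes $H''$ to $H$.

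The main obstacle is this last step, since it adds a non-tree edge from the labeled $s$ to an already-existing unlabeled $v_2$, an operation not directly covered by Lemma \ref{ext} or Lemma \ref{gluing}. The plan is a direct logarithmic-calculus argument: write $t_{S'}(H^*,W)=\int W(x_s,x_{v_2})\phi(x_{v_2},\cdot)\,dx_{v_2}$, where $\phi$ is the integrand of $t_{S'}(H''^*,W)$ before integrating $v_2$, and apply Jensen for concave $\ln$ with the probability density $W(x_s,\cdot)/d(x_s)$ to get $\ln t_{S'}(H^*,W)\ge \ln d(x_s)+\mathbb{E}_{x_{v_2}\sim W(x_s,\cdot)/d(x_s)}(\ln\phi)$. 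Integrating against $f_T$, the first term contributes at least $\ln d$ via Lemma \ref{treeweight} and $z\ln z$-convexity, exactly as in the base case. The delicate part is the second term: swapping the $x_s$ and $x_{v_2}$ integrations, using the structural identity $f_T=W(x_s,x_v)f_{T'}/d(x_v)$ derived by splitting off the leaf $s$, and then invoking the inductive smoothness of $H'$ (whose density is precisely $\int\phi\,dx_{v_2}$) together with one more Jensen step to absorb the weight discrepancy, should produce the remaining $(|E(T)|-1)\ln d$ and complete the bound.
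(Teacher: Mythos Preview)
Your Cases A and B are fine: the decomposition along components of $T\setminus S$ is correct, the verification that $T$ induces $T_j$ on $V(T_j)$ goes through, and the product $\prod_j H_j^{\mathrm{ext}}$ really does rebuild $H$. The problem is Case C, which is unavoidable (already $T=P_3$ with the two endpoints labeled, giving $T^2=C_4$, lands there), and your sketch does not close.

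Concretely, after your Jensen step with weight $W(x_s,\cdot)/d(x_s)$ and the identity $f_T=f_{T'}W(x_s,x_v)/d(x_v)$, the ``second term'' becomes
\[
\int f_{T'}\,\frac{W(x_s,x_v)}{d(x_v)}\,\frac{W(x_s,x_{v_2})}{d(x_s)}\,\ln\psi(x_{v_2},x_{S'})\,dx_{V(T)}\,dx_{v_2}.
\]
The inductive smoothness of $H'$ controls $\mathbb{E}\bigl(f_{T'}\ln\int\psi\,dx_{v_2}\bigr)$, not this weighted integral. Integrating out $x_s$ leaves the kernel $g(x_{v_2},x_v)=\int W(y,x_v)W(y,x_{v_2})/(d(x_v)d(y))\,dy$, which is a probability density in $x_{v_2}$; but applying Jensen for the concave $\ln$ in $x_{v_2}$ against $g$ gives an \emph{upper} bound, not the lower bound you need. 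Similarly, pushing the concave Jensen to the $x_s$-integration, or ``swapping $x_s$ and $x_{v_2}$,'' runs into the same wrong-direction issue. The phrase ``one more Jensen step to absorb the weight discrepancy'' is exactly the missing idea, and I do not see a way to supply it within this inductive scheme.

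The paper avoids induction altogether. It observes that smoothness of $H$ is equivalent to $\mathbb{E}(f_T\ln s)\ge (m-1)\ln d$ with $s=t_S(T,W)$, introduces $q=\prod_i d(x_i)^{r_i-1}$, and splits $\ln s=\ln(sq^{-1})+\ln q$. Writing $f_T=d^{-1}t_M(T,W)q^{-1}$, the first piece becomes $d^{-1}\mathbb{E}\bigl(t_M(T,W)s^{-1}\cdot(sq^{-1})\ln(sq^{-1})\bigr)$; since $\mathbb{E}(t_M(T,W)s^{-1})=1$ and $\mathbb{E}(t_M(T,W)q^{-1})=d$, convex $z\ln z$ gives $\ge\ln d$. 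The second piece is $\sum_i(r_i-1)\,\mathbb{E}(f_T\ln d(x_i))$, and each summand is $\ge (r_i-1)\ln d$ by Lemma~\ref{treeweight} and $z\ln z$. Summing yields $(m-1)\ln d$ directly. This single algebraic splitting replaces your entire Case C; you may want to use it there, or simply adopt the direct argument.
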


\begin{proof} The statement is obviously equivalent with the following one. Let $T$ be a tree on $M=\{x_1,x_2,\dots,x_m\}$ and let $S=\{x_1,x_2,\dots,x_n\}\subset M$. Then 
$$\mathbb{E}(f_T\ln t_S(T,W))\geq |E(T)|\ln d.$$
Let $s=t_S(T,W)$ and  $q=\prod_{i=1}^m d(x_i)^{r_i-1}$. Then
$$\mathbb{E}(f_T\ln s)=d^{-1}\mathbb{E}(t_M(T,W)s^{-1}~(sq^{-1})\ln (sq^{-1}))+d^{-1}\mathbb{E}(t_M(T,W)s^{-1}sq^{-1}\ln q).$$
$$\geq \ln d+\sum_{i=1}^m d^{-1}(r_i-1)\mathbb{E}(\mathbb{E}_{x_i}(t_M(T,W)s^{-1}sq^{-1}\ln d(x_i)))=$$
$$=\ln d+\sum_{i=1}^m d^{-1}(r_i-1)\mathbb{E}(d(x_i)\ln d(x_i))\geq\ln d+\sum_{i=1}^m (r_i-1)\ln d=(m-1)\ln d.$$
\end{proof}

To illustrate lemma \ref{refl} on an example, let $T$ be the path of length $m$ such that the two endpoints are labeled. Then $T^2$ is the cycle $C_{2m}$ of length $2m$. The lemma implies that a path of length $m$ is smooth inside $C_{2m}$. Then lemma \ref{unlab} shows that any path of length at most $m$ is also smooth inside $C_{2m}$ and thus in particular edges are smooth. It follows from Lemma \ref{gluing} that if we glue together even cycles along an edge then the resulting graph is Sidorenko. We will see later that this is true with arbitrary Sidorenko graphs.

Now we introduce a class of graphs and we call them {\it reflection trees}. They include trees, even cycles, and bipartite graphs in which one vertex is complete to the other side. We prove that reflection trees are Sidorenko.

\begin{definition}[Reflection tree] A reflection tree is a graph obtained from a tree $T$ by applying the reflection operation to a collection of sub-trees in $T$. 
\end{definition}

Two examples for reflection trees are even cycles and bipartite graphs in which one vertex is complete to the other side.
Even cycles are obtained by reflecting a path and the other example is obtained from a star reflecting sub-stars.
Now we are ready to prove that reflection trees are Sidorenko.

\medskip

\noindent{\it Proof of Theorem \ref{rts}.}~~ Lemma \ref{refl} and lemma \ref{ext} together show that in any graph obtained from a tree $T$ by reflecting a subtree the tree $T$ is smooth. Then lemma \ref{gluing} finishes the proof.

\bigskip

\section{Smoothness of edges}

\bigskip

To avoid complications, our original definition for smoothness used functions $W$ that are strictly positive.  However smoothness can be equivalently defined in purely graph theoretic terms.
Using the notation from the previous chapter let $H\in\mathcal{G}_n$ be a bipartite graph on the vertex set $\{x_1,x_2,\dots,x_m\}$ such that the spanned subgraph on $S=\{x_1,x_2,\dots,x_n\}$ is a tree $T$. Let $G$ be a finite graph (replacing $W$). Homomorphisms from $H$ (resp. $T$) to $G$ can be looked at as assignments of values from $V(G)$ to the variables $\{x_1,x_2,\dots,x_m\}$ (resp $\{x_1,x_2,\dots,x_n\}$). Let $\mu_T$ denote the probability distribution on $V(G)^n$ defined by the density function $f_T$. The places where $f_T$ is not defined are set to $0$. Intuitively, the distribution $\mu_T$ is a random copy of $T$ built up in $G$ by first choosing a random edge $e$ and then growing $T$ by adding leaves to the existing configuration one by one in a random way. This can also be considered a branching random walk with structure $T$ started from a random edge.
The inequality defining smoothness becomes
\begin{equation}\label{smooth2}
\mathbb{E}(\ln t_S(H^*,G))\geq |E(H^*)|\ln d.
\end{equation}
where the expected value is taken with respect to the distribution $\mu_T$.
It can be seen from standard methods in graph limit theory that the two definitions are equivalent. 
From (\ref{smooth2}) we can immediately see that for every (homomorphic) copy of $T$ in $G$ the restricted homomorphism density $t_S(H^*,G)$ can't be $0$. This gives an interesting topological obstruction to smoothness.
 
\begin{definition}[Retract] Let $H$ be a graph and $S\subset V(H)$ be a subset of its vertices. We say that $S$ (Or the graph $H_2$ spanned on $S$) is a retract of $H$ if there is a graph homomorphism $\phi:H\rightarrow H_2$ such that $\phi$ restricted to $H_2$ is the identity map.
\end{definition}

It is easy to see that $H_2$ is a retract if and only if any graph homomorphism $\phi':H_2\rightarrow G$ into an arbitrary graph $G$ extends to a graph homomorphism $\phi:H\rightarrow G$.
The next lemma follows immediately from (\ref{smooth2}).

\begin{lemma} If a tree $T$ spanned on the vertex set $S\subset V(H)$ in a graph $H$ is smooth then $T$ is a retract of $H$.
\end{lemma}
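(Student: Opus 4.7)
The plan is to invoke the equivalence, recalled immediately before the lemma, that $T$ is a retract of $H$ if and only if every graph homomorphism $\phi':T\to G$ into an arbitrary graph $G$ extends to a homomorphism $\phi:H\to G$. Specializing this to $G=T$ and $\phi'=\mathrm{id}_T$ then yields the retraction $H\to T$ directly, so it suffices to establish the extension property for every finite $G$, using only the graph-theoretic form (\ref{smooth2}) of smoothness.

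First I would pin down the support of the probability measure $\mu_T$ on $V(G)^n$. Its density $f_T$ contains the factor $\prod_{(x_i,x_j)\in E(T)} W_G(v_i,v_j)$, which is positive precisely when the tuple $(v_1,\ldots,v_n)$ defines a homomorphism $T\to G$; the remaining factors $d(v_i)^{1-r_i}$ are then also positive because every vertex of a tree on at least two vertices has positive degree, so each image $v_i$ has positive degree in $G$. Hence $\mu_T$ assigns strictly positive probability to every homomorphism $T\to G$.

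Now suppose, for contradiction, that some $\phi_0':T\to G$ did not extend to a homomorphism $H\to G$. Setting $v_i=\phi_0'(x_i)$, this would mean that no assignment of the unlabeled vertices $x_{n+1},\ldots,x_m$ to $V(G)$ sends every edge of $H^*$ to an edge of $G$, so $t_S(H^*,G)(v_1,\ldots,v_n)=0$ and $\ln t_S(H^*,G)=-\infty$ at this point. Since $\mu_T$ puts positive mass there, the expectation $\mathbb{E}_{\mu_T}(\ln t_S(H^*,G))$ would equal $-\infty$, contradicting the finite lower bound $|E(H^*)|\ln d$ supplied by smoothness. This is exactly the ``topological obstruction'' observed right after (\ref{smooth2}), and it yields the extension property and hence the retraction.

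The only step that needs real care is the transfer of the functional smoothness inequality (originally formulated for strictly positive $W$) to the discrete form (\ref{smooth2}) when $W_G$ is allowed to vanish. I would handle this by the same $f_T'$-style substitution discussed in the remark after Theorem \ref{CFST}, declaring undefined expressions to be $0$; once that is in place the $-\infty$ argument above is immediate.
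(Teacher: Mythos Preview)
Your argument is correct and is exactly the approach the paper has in mind: the paper states that the lemma ``follows immediately from (\ref{smooth2})'' via the observation that $t_S(H^*,G)$ cannot vanish on any homomorphic copy of $T$, which is precisely the $-\infty$ contradiction you wrote out. You have simply made explicit the support computation for $\mu_T$ and the passage from the extension property to the retraction, both of which the paper leaves to the reader.
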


The following natural conjecture arises.

\begin{conjecture} If $H$ is a Sidorenko graph and a tree $T$ is a retract of $H$ then $T$ is smooth in $H$.
\end{conjecture}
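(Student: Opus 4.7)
The plan is to try to upgrade the arithmetic Sidorenko bound $t(H,W)\ge d^{|E(H)|}$ to the geometric smoothness bound $\mathbb{E}(f_T\ln t_S(H^*,W))\ge |E(H^*)|\ln d$ via a tensor-power trick. Let $H^{(k)}\in\mathcal{G}_n$ denote the graph obtained by identifying $k$ labeled copies of $H$ along the labeled set $S$; it consists of one copy of the tree $T$ with $k$ copies of $H^*$ pasted on. Writing $u = t_S(H^*,W)/d^{|E(H^*)|}$ and $W_T = \prod_{e\in E(T)}W(e)$, one computes
$$t(H^{(k)},W) \;=\; d^{\,k|E(H^*)|}\,\mathbb{E}_S\!\bigl(W_T\cdot u^k\bigr),$$
so Sidorenko-ness of $H^{(k)}$ for every $k\ge 0$ would give $\mathbb{E}_S(W_T u^k)\ge d^{|E(T)|}$ for every $k$, with equality at $k=0$ when $W$ is constant. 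Differentiating this family at $k=0$ would produce $\mathbb{E}_S(W_T\ln u)\ge 0$, i.e.\ the smoothness inequality with respect to the probability density $W_T/t(T,W)$ rather than $f_T$, and the discrepancy between the two measures would then have to be reconciled in a separate step.

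First I would verify the easy combinatorial fact that the retract hypothesis propagates: if $T$ is a retract of $H$ then it is automatically a retract of $H^{(k)}$, by applying the retract in each copy separately. I would then attempt an induction on $k$, where the base case $k=1$ is the Sidorenko hypothesis on $H$ and the inductive step glues one more copy of $H$ to $H^{(k)}$ along $T$; this would require strengthening Theorem~\ref{edglue} from single-edge gluing to gluing along an entire tree retract. A parallel line of attack, modelled on the proof of Lemma~\ref{refl}, would use the retract map $\phi:H\to T$ directly: for any $T$-homomorphism $x_1,\dots,x_n$ into $[0,1]$, the diagonal assignment $y\mapsto x_{\phi(y)}$ of the non-labeled vertices of $H$ is automatically a valid $H^*$-homomorphism, and one would try to promote this pointwise combinatorial observation into a pointwise analytic lower bound on $t_S(H^*,W)$ that integrates correctly against $f_T$, perhaps by combining it with a local regularisation of $W$.

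The hard part is precisely the step that turns Sidorenko into smoothness: Jensen gives $\mathbb{E}(\ln X)\le\ln\mathbb{E}(X)$, the wrong direction for extracting a geometric mean from an arithmetic one. The tensor-power strategy shifts this difficulty onto proving Sidorenko-ness of $H^{(k)}$ for every $k$, but the gluing theorems currently available in the paper (Theorem~\ref{edglue} and Lemma~\ref{gluing}) take smoothness of the interface as an \emph{input}, not merely the retract property, so this approach is circular without a new ingredient. A second, compounding difficulty is that the measure produced by the tensor-power construction is $W_T/t(T,W)$, which differs from $f_T$ by the factor $\prod_i d(x_i)^{1-r_i}$; reconciling these two measures appears to demand a tensor construction with ``negative-degree'' decorations at the labeled vertices, which has no obvious graph-theoretic realisation. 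My expectation is that a proof of the conjecture will require genuinely new analytic input---either a gluing theorem preserving Sidorenko-ness along arbitrary tree retracts, or a direct analytic extraction of a pointwise lower bound on $t_S(H^*,W)$ from the map $\phi$---going beyond the conditional-expectation and Jensen calculus developed in this paper.
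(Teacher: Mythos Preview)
The statement you are attempting is labeled a \emph{conjecture} in the paper, and the paper does not prove it; it is posed as an open problem. So there is no ``paper's own proof'' to compare against in full generality. Your proposal is not a proof either, and you are candid about this: the two obstacles you isolate (circularity of the gluing lemmas, and the mismatch between the measure $W_T/t(T,W)$ and the branching-walk measure $f_T$) are real and are exactly why the conjecture is left open.

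What the paper \emph{does} prove is the special case where $T$ is a single edge (Theorem~\ref{edgesmooth}), and it is instructive to compare that argument to your plan. The paper's tensor-power trick is on the \emph{target} side, not the $H$ side: one takes tensor powers $G_k=G^{\otimes k}$ of a finite graph $G$, observes that the random variable $X_k=t_f(H^*,G_k)$ over a uniformly random edge $f$ of $G_k$ is the product of $k$ i.i.d.\ copies of $X_1$, and then uses the law of large numbers on $\ln X_k$. A deletion argument (remove the edges $f$ with $X_k<d^{ka}/2$ and reapply Sidorenko to the depleted graph) forces a uniform positive fraction of edges to have $X_k\ge d^{ka}/2$, and intersecting this with the law-of-large-numbers event gives $\mathbb{E}(\ln X_1)\ge a\ln d$. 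This sidesteps both of your obstacles at once: no gluing of copies of $H$ is needed, and for a single edge the branching-walk measure $f_T$ \emph{is} $W(x_1,x_2)/d$, so the measure mismatch does not arise.

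Your $H$-side tensoring $H^{(k)}$ does not obviously enjoy an analogous law-of-large-numbers structure, and as you note, establishing that $H^{(k)}$ is Sidorenko already presupposes the smoothness you want. If you wish to push further, the natural question suggested by the paper's method is whether the $G$-tensoring and deletion argument can be adapted when $T$ has more than one edge; the difficulty is that the deletion step removes edges of $G_k$, which interacts cleanly with a single labeled edge of $H$ but not with a multi-edge labeled tree, and the i.i.d.\ product structure of $X_k$ over random edges has no immediate analogue for random copies of $T$ sampled from $\mu_T$.
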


We prove the following special case of the above conjecture.

\begin{theorem}\label{edgesmooth} If $H$ is a Sidorenko graph then every edge is smooth in $H$.
\end{theorem}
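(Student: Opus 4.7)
Unpacking the definition, smoothness of the edge $e=(x_1,x_2)$ in $H$ reduces to showing
$$\mathbb{E}_\mu(\ln g) \geq (|E(H)|-1)\ln d,$$
where $g(x_1,x_2) = t_S(H\setminus e, W)$ and $\mu$ is the probability measure on $\Omega^2$ with density $W/d$ (note $T$ is the single edge on $\{x_1,x_2\}$ so $r_1=r_2=1$ and $f_T = d^{-1} W(x_1,x_2)$). Sidorenko for $H$ yields only the weaker arithmetic-mean version $\mathbb{E}_\mu(g) \geq d^{|E(H)|-1}$, so the proof must promote an arithmetic-mean lower bound on $g$ to a geometric-mean one. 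As a first step I would establish the moment bounds $\mathbb{E}_\mu(g^N) \geq d^{N(|E(H)|-1)}$ for every positive integer $N$ by considering the graph $H_N$ obtained by identifying $N$ disjoint copies of $H$ along the common edge $e$: one has $t(H_N,W) = d\cdot\mathbb{E}_\mu(g^N)$, and Jensen's inequality (convexity of $z\mapsto z^N$) together with Sidorenko for $H$ gives the bound, confirming in particular that $H_N$ is itself Sidorenko without circular use of the edge-gluing theorem~\ref{edglue}.

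The crucial step is to upgrade these integer moment bounds to the derivative-at-zero statement $\mathbb{E}_\mu(\ln g) \geq (|E(H)|-1)\ln d$. The function $\Phi(s) = \ln\mathbb{E}_\mu(g^s)$ is convex in $s$ with $\Phi(0)=0$ and $\Phi'(0^+) = \mathbb{E}_\mu(\ln g)$, so the inequalities $\Phi(N) \geq N(|E(H)|-1)\ln d$ alone are insufficient, since convexity only controls $\Phi'(0^+)$ from above by the secant slopes $\Phi(N)/N$. The argument must therefore use Sidorenko applied to graphons other than $W$ itself. A natural candidate is the tilted graphon $W_\alpha(x,y) = W(x,y)\,g(x,y)^\alpha$, symmetrized if necessary, which is non-negative and symmetric; Sidorenko for $H$ gives $t(H,W_\alpha)\geq \mathbb{E}(W_\alpha)^{|E(H)|}$ for every real $\alpha\geq 0$. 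I would then try to extract the smoothness bound by combining first-order expansions at $\alpha = 0$ with log-calculus Jensen steps of the type used in the Blakley-Roy and Conlon-Fox-Sudakov proofs, applying concavity of $\ln z$ and convexity of $z\ln z$ to the contributions from the edges $(i,j)\in E(H)\setminus\{e\}$ in order to isolate the distinguished edge $e$.

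The main obstacle is precisely the asymmetry between the arithmetic-mean (Sidorenko) and geometric-mean (smoothness) formulations: smoothness is a genuinely stronger statement about the distribution of $g$ under $\mu$, and no collection of tensor-power bounds involving only graphs of the form $H_N$ can suffice. The full strength of Sidorenko---that it holds for every non-negative graphon---is therefore essential, and the delicate part of the proof lies in disentangling the first-order contributions coming from the various edges of $H$ in the tilted expansion so that the distinguished edge $e$ can be cleanly separated from the others. Degenerate behavior where $W$ or $g$ vanishes on parts of $\Omega^2$ is handled by the zero-extension convention introduced in the remark following Theorem~\ref{CFST}.
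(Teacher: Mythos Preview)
Your diagnosis of the difficulty is exactly right: Sidorenko gives $\mathbb{E}_\mu(g)\ge d^{|E(H)|-1}$ and you must upgrade this to $\mathbb{E}_\mu(\ln g)\ge (|E(H)|-1)\ln d$, and you correctly observe that the integer moment bounds coming from the glued graphs $H_N$ cannot do this by themselves. However, the tilted-graphon route you outline has a genuine gap. Differentiating the inequality $t(H,W_\alpha)\ge \mathbb{E}(W_\alpha)^{|E(H)|}$ at $\alpha=0$ only produces a first-order constraint when equality holds at $\alpha=0$; for a generic $W$ one has $t(H,W)>d^{|E(H)|}$ strictly, so the zeroth-order slack swamps whatever first-order information you might hope to extract. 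Nor can you send $\alpha\to 0^-$, since $g$ may vanish on a set of positive $\mu$-measure and $W_\alpha$ then fails to be a graphon. The additional Jensen steps you allude to would need to account for every edge of $H$ simultaneously (each edge $(i,j)\neq e$ contributes a factor $g(x_i,x_j)^\alpha$), and there is no evident way to isolate the contribution of $e$ from that tangle.

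The paper's argument is quite different and, interestingly, \emph{is} a tensor-power argument --- but it tensors the target rather than $H$. Working with a finite graph $G$ and its tensor powers $G_k=G^{\otimes k}$, the key structural fact is that the random variable $X_k=t_f(H^*,G_k)$, with $f$ a uniformly random edge of $G_k$, has the same distribution as a product of $k$ independent copies of $X_1$. Hence $(\ln X_k)/k$ concentrates near $\mathbb{E}(\ln X_1)$ by the law of large numbers. To force a lower bound on $\mathbb{E}(\ln X_1)$, one shows that the ``good'' event $E_k=\{f:X_k\ge d^{ka}/2\}$ has probability bounded below by a constant independent of $k$: delete the edges in $E_k$ from $G_k$, apply the Sidorenko hypothesis to the resulting graph $G_k'$, and compare with the trivial upper bound $t(H,G_k')\le d^k(1-\mathbb{P}(E_k))\cdot d^{ka}/2$ coming from the fact that every surviving edge is ``bad''. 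This yields $\mathbb{P}(E_k)\ge 1-2^{-a}$. For large $k$ the good event therefore meets the LLN event, giving $\mathbb{E}(\ln X_1)\ge a\ln d - (\ln 2)/k - \epsilon$, and letting $k\to\infty$ finishes. The two ideas you were missing are (i) tensoring $G$ rather than $H$, which turns the geometric-mean question into a law-of-large-numbers question, and (ii) applying Sidorenko not to a perturbation of $W$ but to the \emph{subgraph} of $G_k$ obtained by deleting the high-$X_k$ edges.
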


\begin{proof} Let $G$ be a finite graph with edge density $d$. We denote by $G_k$ the $k$-th tensor power of $G$.
Assume that $e$ is a fixed edge in $H$ and that $H$ has $a+1$ edges. Let $H^*$ be the graph obtained from $H$ by removing the edge $e$ and labeling the two endpoints by $\{1,2\}$.
Let $E_k$ denote the set of edges $f$ in $G_k$ for which $t_f(H^*,G_k)\geq d^{ka}/2$.

Let $\Omega_k$ be the probability space of a randomly chosen edge in $G_k$ and let $X_k$ be the random variable $t_f(H^*,G_k)$ on $\Omega_k$. It is clear from the definition of $G_k$ that the distribution of $X_k$ is the product of $k$ independent copies of $X_1$. Let $\epsilon>0$ be an arbitrary number. By the law of large numbers, if $k$ is big enough then $$\mathbb{P}(|(\ln X_k)/k-\mathbb{E}(\ln X_1)|>\epsilon)\leq\epsilon.$$

Now we estimate the probability $\mathbb{P}(E_k)=|E_k|/|E(G_k)|$ in the probability space $\Omega_k$.
Let $G'_k$ denote the graph obtained from $G_k$ by deleting the edge set $E_k$.
The edge density of $G'_k$ is equal to $d^k(1-\mathbb{P}(E_k))$ and so using the fact that $H$ is Sidorenko it follows that $t(H,G'_k)\geq d^{(a+1)k}(1-\mathbb{P}(E_k))^{(a+1)}$. On the other hand $t_f(H^*,G'_k)\leq d^{ka}/2$ for every edge $f$ in $G'_k$ and thus $t(H,G'_k)\leq d^k(1-\mathbb{P}(E_k))d^{ka}/2$. We obtain that $1/2\geq (1-\mathbb{P}(E_k))^a$ and thus $\mathbb{P}(E_k)\geq 1-2^{-a}>0$.

Notice that the lower bound for $\mathbb{P}(E_k)$ does not depend on $k$. It follows that if $\epsilon<1-2^{-a}$ and $k$ is sufficiently big then the event $E_k$ intersect the event $|(\ln X_k)/k-\mathbb{E}(\ln X_1)|\leq\epsilon$ and thus
$$\mathbb{E}(\ln X_1)\geq \ln(d^{ka}/2)/k-\epsilon$$ holds for all such choices of $\epsilon$ and $k$.
Consequently $\mathbb{E}(\ln X_1)\geq a\ln d$. This inequality is equivalent with the smoothness of $e$.
\end{proof}

\medskip

\noindent{\it Proof of theorem \ref{edglue}.}~The statement is a direct consequence of theorem \ref{edgesmooth} and lemma \ref{gluing}.

\end{document}